\documentclass[10pt, english,oneside,reqno]{amsart}
\usepackage[utf8]{inputenc}
\usepackage[english]{babel}
\usepackage[
    colorlinks=false,
    pdfborder={0 0 1},
    linkbordercolor={1 0 0},
    citebordercolor={0 1 0},
    urlbordercolor={0 1 1}
]{hyperref}
\usepackage{color}
\usepackage{enumerate}
\usepackage{fancyhdr}

\usepackage{amsmath}
\usepackage{amscd}
\usepackage{amsfonts}
\usepackage{amsthm}
\usepackage{amstext}
\usepackage{amssymb}
\usepackage{amsbsy}
\usepackage{mathrsfs}
\usepackage{tikz-cd}
\usepackage{mathtools}
\usepackage{enumitem}
\newtheorem*{corollary*}{Corollary}

\usepackage{float}
\usepackage{caption}
\usepackage{subcaption}
\usepackage{graphicx}

\usepackage[alphabetic,abbrev]{amsrefs}

\usepackage{url}

\theoremstyle{plain}
\newtheorem{theorem}{Theorem}[section]

\newtheorem{definition}[theorem]{Definition}
\newtheorem{proposition}[theorem]{Proposition}
\newtheorem{corollary}[theorem]{Corollary}
\newtheorem{lemma}[theorem]{Lemma}

\theoremstyle{remark}

\numberwithin{equation}{section}
\numberwithin{figure}{section}

\newcommand{\E}{\mathbb{E}}
\newcommand{\Prob}{\mathbb{P}}
\newcommand{\R}{\mathbb R}

\newcommand{\cL}{\mathcal L}

\newcommand{\ind}{\mathbf 1}

\DeclareMathOperator{\supp}{supp}
\DeclareMathOperator{\Var}{Var}

\usepackage{microtype}
\usepackage{needspace}
\newcommand{\Energy}{\mathcal E}
\newcommand{\cE}{\mathcal E}
\newcommand{\cA}{\mathcal A}
\newcommand{\Disp}{\mathcal D}
\newcommand{\CPE}{\mathcal A}
\newcommand{\Z}{\mathbb Z}

\hypersetup{pdftitle={The generalized exact overlaps conjecture on the dimension of self-similar measures on the line}}
\title[The exact overlaps conjecture on the real line]
{The exact overlaps
conjecture for
self-similar measures on the real line}

\begin{document}
\author[sk]{Samuel Kittle}
\author[ck]{Constantin Kogler}

\begin{abstract}
A generalization of the exact overlaps conjecture predicts a formula for the dimension of self-similar measures in terms of the random walk entropy and the Lyapunov exponent. We prove the exact overlaps conjecture and its generalized version in dimension one. The proof relies on a new entropy inequality quantifying the information lost under addition of independent random variables in terms of a quantity averaging variance across scales.
\end{abstract}

\email{s.kittle@ucl.ac.uk, kogler@ias.edu}

    \address{Samuel Kittle, Department of Mathematics, University College London, 25 Gordon Street, London WC1H 0AY, United Kingdom}

    \address{Constantin Kogler, Institute for Advanced Study, 1 Einstein Dr, Princeton, NJ 08540, United States of America}
\maketitle

\section{Introduction}
Determining the dimension of self-similar measures is a fundamental problem in fractal geometry. In this paper, we address this question in dimension one.
For $1 \leq i \leq k$ and $x \in \mathbb{R}$ let $g_i(x) = r_i x + b_i$ with $0<|r_i|<1$ and $b_i\in \mathbb{R}$ be finitely many distinct contracting similarities on $\mathbb{R}$. We consider a probability measure $\mu = \sum_{i = 1}^k p_i \delta_{g_i}$ supported on the given similarities with $p_i > 0$ for all $1\leq i \leq k$. It is well-established \cite{Hut} that there exists a unique $\mu$-stationary probability measure $\nu$ on $\mathbb{R}$, that is, a measure satisfying
\begin{equation*}
 \mu*\nu=\sum_{i=1}^k p_i(g_i)_*\nu = \nu.
\end{equation*}
The measure $\nu$ is called the self-similar measure of $\mu$. In \cite{FH} it is shown that $\nu$ is
exact dimensional, meaning that there exists $\alpha \in[0,1]$ such that for $\nu$-almost all $x \in \mathbb{R}$ it holds that $\nu([x-r,x+r]) = r^{\alpha + o(1)}$ as $r \to 0$. We refer to $\alpha$ as the dimension of $\nu$ and write $\dim\nu=\alpha$.

The fundamental quantities to calculate the dimension of $\nu$ are the Lyapunov exponent $\chi_{\mu}$ and the random walk entropy $h_{\mu}$ given by 
\[
\chi_{\mu}=\sum_i p_i\log|r_i| \quad\text{ and }\quad h_{\mu}=\lim_{n\to\infty}\frac{H(\mu^{*n})}n,
\]
where $H$ denotes the Shannon entropy. It is well-known that $\dim\nu\le\min\{1,\frac{h_{\mu}}{|\chi_{\mu}|}\}$
and conjectured for example in \cite{VS}*{Conjecture 3} that equality always holds in the latter inequality, that is \begin{equation}\label{eq:exactoverlaps}
 \dim\nu=\min\left\{1,\frac{h_{\mu}}{|\chi_{\mu}|}\right\}.
\end{equation} 

The exact overlaps conjecture, which is mentioned for example in~\cite{B2}, \cite{R}, \cite{FF}, and \cite{W}, claims for $H(p) = H(\mu) = -\sum_{i = 1}^k p_i\log p_i$ that $\dim\nu<\min\{1,\frac{H(p)}{|\chi_{\mu}|}\}$ only if $\mu$ has exact overlaps, meaning that $\mathrm{supp}(\mu)$ does not freely generate a semigroup. As $h_{\mu} = H(p)$ if there are no exact overlaps, the conjecture \eqref{eq:exactoverlaps} is a generalization of the exact overlaps conjecture.
A version of this conjecture for self-similar sets dates back to Simon from 1996 \cite{S}.

A breakthrough on the exact overlaps conjecture was achieved by Hochman \cite{H}, establishing the conjecture under
an exponential separation condition and thereby proving the conjecture for similarities with algebraic parameters. As \cite{H} leaves the case of transcendental parameters open, there has been substantial effort to establish Hochman's results also for transcendental parameters. In \cite{HICM}, Hochman asked if failure of exponential separation necessarily implies exact overlaps, which was disproven by Baker~\cite{B} and B\'ar\'any and K\"aenm\"aki~\cite{BK}. Shmerkin \cite{Shm19} extended Hochman's results to $L^q$ dimensions and as a remarkable application proved Furstenberg's intersection conjecture.

An important achievement was Varj\'u \cite{V} proving the exact overlaps conjecture for transcendental Bernoulli convolutions, relying on delicate results in Diophantine approximation combined with work of
Breuillard and Varj\'u \cite{BVdim},\cite{BVent}. Rapaport \cite{R} settled systems with algebraic contraction ratios and
arbitrary translations. Rapaport and Varj\'u
\cite{RV} proved further cases for homogeneous systems of three maps. Indeed, for the maps $x\mapsto \lambda x, x\mapsto \lambda x + 1$ and $x \mapsto \lambda x + \tau$ with $\lambda \in (0,1)$ and $\tau \in \mathbb{R}$ and $\mu$ the probability measure on these three maps with equal probability weights, Rapaport and Varj\'u
\cite{RV} proved, for example, the exact overlaps conjecture if $(\lambda,\tau) \in (2^{-2/3},1)\times \mathbb{R}$. Moreover, \cite{RV} reduced the exact overlaps conjecture for this system of three maps to a criterion involving the Mahler measure of $\lambda$ and showed that the set of exceptions for $(\lambda, \tau) \in (0,1)\times \mathbb{R}$ has Hausdorff dimension 0. In the thesis of Streck \cite{Str24}, the exact overlaps conjecture was established for the equal weight measure of the maps $x\mapsto \lambda x, x\mapsto \lambda x + 1,x\mapsto \lambda x + \tau$ and $x\mapsto \lambda x + 1 + \tau$ with $\lambda \in (0,1)$ and $\tau \in \mathbb{R}$ if $(\lambda,\tau) \in (2^{-3/2},1) \times \mathbb{R}$.

Feng and Feng \cite{FF} treated
algebraic translations with an arbitrary nonzero real common contraction. Wu \cite{W} showed under a transversality condition that the set of exceptions for the exact overlaps conjecture is at most countable, without making the set of exceptions explicit. We also mention that the variance summation method, as initiated by the first-named author in \cite{Kfur}, was used by the authors \cite{KK25} to generalize Hochman's theorem to contracting on average measures and to show that a weaker condition than exponential separation is sufficient. 

In this paper we establish the exact overlaps conjecture in dimension one as well as the more general conjecture \eqref{eq:exactoverlaps}.

\begin{theorem}\label{thm:main}
Let $\mu$ be a probability measure supported on finitely many contracting similarities of $\mathbb{R}$. Then the self-similar measure $\nu$ of $\mu$ satisfies
\[
 \dim\nu=\min\left\{1,\frac{h_{\mu}}{|\chi_{\mu}|}\right\}.
\]
\end{theorem}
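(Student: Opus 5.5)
We argue by contradiction in the spirit of Hochman's inverse theorem strategy, but we replace the exponential separation input by the variance summation method of \cite{Kfur}, \cite{KK25}, now fed by a new entropy inequality. Suppose $\alpha=\dim\nu<\min\{1,h_\mu/|\chi_\mu|\}$. First, we reduce to a convenient normal form. We pass to a power $\mu^{*n}$, which leaves $\dim\nu$ and the ratio $h_\mu/|\chi_\mu|$ unchanged. We also use standard stopping-time and renewal arguments to organize the random product $g_{\omega_1}\cdots g_{\omega_n}$ according to the scale $|r_{\omega_1}\cdots r_{\omega_n}|\approx e^{-t}$. Then $\nu$ decomposes as an average over such stopped words $\gamma$ of translated, rescaled copies $\gamma_*\nu$. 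From $h_\mu>\alpha|\chi_\mu|$ we get the key entropy surplus: at scale $e^{-t}$ the stopped random walk, viewed as a measure on translations, has entropy about $h_\mu t/|\chi_\mu|$. This exceeds the $\alpha t$ that is accounted for by $\nu$ itself.

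The core step is to show that this surplus forces an increase in dimension. The information in the random walk that is not visible in $\nu$ must be "lost" in the convolution $\nu=\mu_t*\nu$, where $\mu_t$ is the law of the stopped walk. We would quantify this loss with an inequality of the following shape. For independent $X$ and $Y$, the drop $H(X)+H(Y)-H(X+Y)$ at scale $r$ should be controlled from below by a quantity averaging the conditional variance of $X$ across scales. The model is
\[
H(X+Y)_r \ge H(Y)_r + c\int_{r}^{1} \min\{1,\Var_s(X)/s^2\}\,\frac{ds}{s} - \text{error},
\]
where $Y$ has dimension $\alpha<1$ and $\Var_s$ denotes a localized variance at scale $s$. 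The intended consequence is that the entropy gain is proportional to the total variance summed over scales, with no requirement that atoms be separated. This is exactly what lets us avoid exponential separation: we never need distinct words to be $e^{-cn}$-apart, only that the random translations carry variance at many scales.

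Next we produce that variance. Conditioning on the stopped word structure, we compare two independent stopped walks at scale $e^{-t}$. The entropy surplus $h_\mu-\alpha|\chi_\mu|>0$ implies that for a positive proportion of scales $s\in[e^{-t},1]$, the conditional law of the translation part, given coarser information, has variance at least $cs^2$. Summing these contributions via the inequality above yields an entropy of $\nu$ at scale $e^{-t}$ of at least $(\alpha+\delta)t$, contradicting $\dim\nu=\alpha$ and exact dimensionality \cite{FH}. This parallels \cite{KK25}, except that there variance was produced using separation. Here it is produced directly from the entropy surplus: if the translations had small variance at most scales, they would be nearly determined by coarse data, and then their entropy would have to be captured by $\nu$.

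I expect the main obstacle to be proving the new entropy inequality in a form strong enough to be summed over all scales with errors that are $o(t)$. The difficulty is that $\nu$ need not satisfy any porosity or regularity, so gains at a single scale must be established uniformly for a measure that is merely of dimension $\alpha<1$. I would first attempt it at a single scale, for $X$ a Gaussian-like perturbation of variance $\sigma^2$ against $Y$ of entropy dimension bounded away from $1$ at that scale, using the fact that smoothing by the heat kernel strictly increases entropy unless $Y$ is near-uniform. Then I would use a multiscale decomposition to add these gains, handling non-Gaussian $X$ by splitting it into a sum of many small independent pieces and applying a central-limit-type approximation.
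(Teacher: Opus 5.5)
\paragraph{Gap: producing the variance.} The decisive step is ``the entropy surplus implies that for a positive proportion of scales $s\in[e^{-t},1]$ the conditional law of the translation part has variance at least $cs^2$.'' You assert it without a mechanism, and as stated it cannot be the right statement.

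At resolution $e^{-t}$, the translation part of the stopped walk is distributed like $\nu$ itself: by stationarity, $\nu$ is its law blurred by $R_\tau Z'$, which has size $O(e^{-t})$. So variance at a positive proportion of scales in $[e^{-t},1]$ already holds for the middle-thirds Cantor measure, where there is no surplus and $\dim\nu=h_\mu/|\chi_\mu|<1$. No inequality can convert that variance into entropy $(\alpha+\delta)t$.

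The surplus lives strictly below the cylinder scale. Since $\alpha<1$, Hochman's theorem (Lemma~\ref{lem:hochman}) gives $H(B_n;C^{-n})=n|\chi_\mu|\alpha+o(n)$ for every $C>e^{|\chi_\mu|}$. So the missing $\delta n$ is the conditional entropy of $B_n$ given its cell of mesh $C^{-n}$, and without separation it may sit at arbitrarily fine scales.

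The idea you are missing is how to turn this conditional entropy into variance. The paper does it with Theorem~\ref{prop:addition}, which bounds the entropy \emph{loss} $\sum_i H(X_i)-H(\sum_i X_i)$ from \emph{above} by $30m\sum_i\mathcal{E}(X_i)$. This is the opposite direction to the gain inequality you propose, and it needs no dimension hypothesis on the summands. The paper applies it to $m$ independent copies of $B_n$, each conditioned on its cell:
\begin{itemize}
\item the sum has entropy at most $n(q-1)\log(m+1)$, because each digit sum $\sum_\ell D_j^{(\ell)}$ takes few values;
\item the conditional entropies add up to about $m\delta n$.
\end{itemize}
The loss is therefore linear in $m$, and this forces $\mathcal{E}_{C^{-n}}(B_n)\ge cn$ (Proposition~\ref{prop:finite-sum}, Corollary~\ref{cor:fine-variance}). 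Your heuristic that small variance means ``nearly determined by coarse data, so the entropy would be captured by $\nu$'' does not substitute for this abelian argument.

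\paragraph{Gap: the endgame.} Your endgame does not match the variance that is actually available. An inverse-theorem argument on a window of scales such as $[e^{-qt},e^{-t}]$ needs the variance inside that window. The variance produced above instead lies at uncontrolled scales below $C^{-n}$.

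The paper therefore proceeds as follows:
\begin{itemize}
\item it selects widely separated scales $s_1<\dots<s_m$, with ratios at least $\rho_{\min}^{-(N+1)}$;
\item it uses stopping times and stationarity to rescale each to a common scale $r$ as an independent summand with disjoint digit blocks (Lemma~\ref{lem:concatenation});
\item it iterates this to obtain $W(\nu;r)\to0$;
\item it uses a Berry--Esseen normal approximation (Proposition~\ref{prop:w_to_zero}) to conclude $\dim\nu=1$.
\end{itemize}
This contradicts $\alpha<1$ directly, rather than through an increment $\alpha\to\alpha+\delta$. In particular, the single-scale Gaussian-smoothing inverse theorem you identify as the main obstacle is not where the difficulty lies. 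The difficulty lies in extracting variance from the surplus and in gathering variance from arbitrary scales.
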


A proof of Theorem~\ref{thm:main} was independently announced in \cite{OpenAIMath} as paper number 148. We will not discuss the mathematical content of the latter paper in detail, yet remark that Theorem~\ref{prop:addition} stated below is a quantitative version of Lemma 3.2 of \cite{OpenAIMath}*{Paper 148} and therefore a strengthening. We highlight that Theorem~\ref{prop:addition} and Lemma 3.2 of \cite{OpenAIMath}*{Paper 148} are, when applied to self-similar measures, the core novelty of the proofs. In this paper, we used ideas from \cite{Kfur}, \cite{KKabs} and \cite{KK25} to conclude the proof of Theorem~\ref{thm:main}, whereas \cite{OpenAIMath}*{Paper 148} uses entropy-growth arguments in the spirit of \cites{H, Vabs}.

We refer the reader to our machine contribution statement explaining how the authors found an initial proof of Theorem~\ref{thm:main} and discussing the authors' contribution. We remark that \cite{OpenAIMath} claim that their proof of Theorem~\ref{thm:main} was found on September 24, 2026, while the authors first discovered a proof of Theorem~\ref{thm:main} on September 27, 2026. \cite{OpenAIMath} became public on October 6, 2026 and the authors made this paper available online on October 7, 2026.

To give a concrete consequence of Theorem~\ref{thm:main}, consider for $x \in \mathbb{R}$ the similarities $$g_1(x)= \lambda_1 x + 1 \quad\quad\text{ and }\quad\quad  g_2(x) = \lambda_2 x - 1$$ with $\lambda_1,\lambda_2 \in (0,1)$ and let $\mu = \frac{1}{2}\delta_{g_1} + \frac{1}{2}\delta_{g_2}$ be the unbiased probability measure supported on these two maps. Suppose that, after interchanging indices if necessary, $\lambda_2$ is transcendental over $\mathbb{Q}(\lambda_1)$. Then the maps have no exact overlaps and therefore it follows from Theorem~\ref{thm:main} that $$\dim \nu = \min\left\{ 1,\frac{2\log 2}{\log \lambda_1^{-1} + \log \lambda_2^{-1}} \right\}.$$

Given contracting similarities $g_1, \ldots, g_k$ on $\mathbb{R}$, there exists a unique non-empty compact set $K \subset \mathbb{R}$ satisfying $K = \bigcup_{i = 1}^k g_i(K)$. The set $K$ is called the self-similar set or the attractor of $g_1, \ldots , g_k$. A significant motivation for the exact overlaps conjecture is, as is well-known, that it implies an explicit formula for the Hausdorff dimension of $K$ when there are no exact overlaps.
Under exponential separation, this dimension formula follows from \cite{H}*{Corollary 1.2}. Theorem~\ref{thm:main} implies the formula assuming only the absence of exact overlaps. 

\begin{corollary}\label{cor:self-similar-sets}
Let $g_1,\ldots , g_k$ be contracting similarities on $\mathbb{R}$ with contraction rates $r_i$ satisfying $0 < |r_i| < 1$ for $1 \leq i \leq k$. Let $s\geq 0$ be the unique non-negative solution of $\sum_{i=1}^k |r_i|^s=1$. If $g_1,\ldots , g_k$ have no exact overlaps, then the self-similar set $K$ satisfies
\[
    \dim K=\min\{1,s\}.
\]
\end{corollary}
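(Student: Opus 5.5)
The plan is to deduce the corollary from Theorem~\ref{thm:main} by a standard variational argument. Apply the theorem to the natural weights $p_i=|r_i|^s$, where $s$ is as in the statement. We have the easy upper bound $\dim K\le\min\{1,s\}$. It holds because $K$ is covered, for each $n$, by the images $g_{i_1}\circ\cdots\circ g_{i_n}(K)$, and these have diameters $|r_{i_1}\cdots r_{i_n}|\,\mathrm{diam}K$. Since $\sum_{i_1,\dots,i_n}|r_{i_1}\cdots r_{i_n}|^s=1$, the $s$-dimensional Hausdorff measure of $K$ is bounded, so $\dim K\le s$. Trivially $\dim K\le 1$.

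For the lower bound, let $p_i=|r_i|^s$, so that $\sum_i p_i=1$ and $p_i>0$, and put $\mu=\sum_i p_i\delta_{g_i}$. The self-similar measure $\nu$ of $\mu$ is supported on $K$: its support is a nonempty compact set invariant under the Hutchinson operator, hence equal to $K$. Being exact dimensional, $\nu$ satisfies $\dim K\ge\dim\nu$ by the mass distribution principle.

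\begin{itemize}
\item \emph{No exact overlaps gives $h_\mu=H(p)$.} Because the $g_i$ freely generate a semigroup, distinct words of length $n$ give distinct maps. Hence $\mu^{*n}$ is the pushforward of $p^{\otimes n}$ under an injective map, so $H(\mu^{*n})=nH(p)$ and $h_\mu=H(p)$.
\item \emph{Computing the ratio.} We have
\[H(p)=-\sum_i |r_i|^s\log|r_i|^s=s\,|\chi_\mu|,\qquad \chi_\mu=\sum_i|r_i|^s\log|r_i|.\]
Therefore $h_\mu/|\chi_\mu|=s$.
\end{itemize}

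Theorem~\ref{thm:main} then gives $\dim\nu=\min\{1,s\}$, and so $\dim K\ge\min\{1,s\}$.

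There is a small point to handle in the first bullet: Theorem~\ref{thm:main} refers to the similarities in $\supp\mu$, and we need these to be distinct. This follows from the absence of exact overlaps, since free generation forces $g_i\ne g_j$ for $i\ne j$. We also need $s$ to exist and be unique. This holds because $t\mapsto\sum_i|r_i|^t$ is strictly decreasing, tends to $0$ as $t\to\infty$, and takes the value $k\ge1$ at $t=0$; the degenerate case $k=1$ gives $s=0$, with $K$ a point. All the content of the corollary lies in Theorem~\ref{thm:main}, so I do not expect any step here to be a real obstacle. The only step needing care is verifying that exact-overlap-freeness yields the injectivity used in the entropy computation.
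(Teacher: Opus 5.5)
Your proposal is correct and is essentially the paper's argument. The paper only remarks that the corollary follows from Theorem~\ref{thm:main} exactly as Hochman's Corollary 1.2 follows from his Theorem 1.1, and that deduction consists of the same steps you give: the natural weights $p_i=|r_i|^s$, the identity $h_\mu=H(p)=s|\chi_\mu|$ in the absence of exact overlaps, the lower bound $\dim K\ge\dim\nu$, and the standard covering upper bound.
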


The proof of Theorem~\ref{thm:main} presented in this paper relies on the entropy inequality Theorem~\ref{prop:addition}, which is of independent interest, combined with \cite{H}*{Theorem 1.4} and a simplified version of the variance summation method from \cite{KK25}.


As shown in \cite{KK26Supp}, the method of this paper extends to arbitrary dimensions provided that the rotations of the similarities in \(\operatorname{supp}\mu\) generate a virtually solvable group, which, in particular, includes the two dimensional case in full generality. By the Tits alternative \cite{Tit72}, the latter condition is equivalent to requiring that the entropy of the linear part of the product of the first \(n\) similarities is \(o(n)\). However, the application of the entropy inequality Theorem \ref{prop:addition} in Proposition \ref{prop:finite-sum} as discussed below fundamentally relies on addition being an abelian operation. Therefore, we do not regard this paper as having made significant progress on the general $d$-dimensional case.

\subsection*{A new entropy inequality}

 The main novelty in the proof of Theorem~\ref{thm:main} is a general inequality estimating how much the entropy of a sum of random variables can differ from the entropy of the summands. We first state the new inequality and then discuss how it can be combined with previous ideas to deduce Theorem~\ref{thm:main}.

To state the entropy inequality, we first introduce a few definitions. For a compactly supported probability measure $\rho$ on $\mathbb{R}$ and a scale $r > 0$ we denote by $V_{\rho}(r)$ the normalized local variance defined as follows. Write $I(a,r)=[a,a+r)$ and for $r > 0$ we define
\begin{equation}\label{eq:local-variance}
 V_\rho(r)=\frac4{r^3}\int_\R F_\rho(I(a,r))\,da,
\end{equation} where $$F_\rho(I(a,r))=\rho(I(a,r))\Var(\rho\mid I(a,r))
          =\inf_{c\in\R}\int_{I(a,r)}(x-c)^2\,d\rho(x).$$
The factor $4r^{-3}$ consists of $r^{-1}$ for averaging over the size of the interval, $r^{-2}$ for normalizing the squared error and the constant $4$ is chosen so that always $0\leq V_{\rho}(r) \leq 1$. We furthermore define for $R > 0$
\begin{equation}\label{eq:energy}
 \Energy_R(\rho)=\int_0^R V_\rho(r)\,\frac{dr}{r}
 \quad \text{ and } \quad \Energy(\rho)=\Energy_\infty(\rho).
\end{equation}
We call $\mathcal{E}(\rho)$ the variance energy. The logarithmic measure $dr/r$
assigns the same weight to every interval of scales $[r,2r]$. We
write $V_Y(r)$ for $V_\rho(r)$ and $\mathcal{E}(Y)$ for $\mathcal{E}(\rho)$ when the random variable $Y$ has law $\rho$.

We now state the entropy inequality. Note that we call a random variable finitely supported if its law has finite support.

\begin{theorem}[Entropy loss under addition]\label{prop:addition}
For $m\geq 1$ and independent finitely supported real random variables $X_1,\ldots,X_m$, it holds that
\begin{equation}\label{eq:addition}
 \sum_{i=1}^m H(X_i)-H\left(\sum_{i=1}^mX_i\right)
 \le 30 m\sum_{i=1}^m\Energy(X_i).
\end{equation}
\end{theorem}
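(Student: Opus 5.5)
Our plan is to first reduce to the case $m=2$ and then iterate. For two independent finitely supported variables $X,Y$, the entropy loss $H(X)+H(Y)-H(X+Y)$ equals the mutual information $I(X;X+Y)$ minus nothing extra. More precisely, $H(X)+H(Y)-H(X+Y)=H(X\mid X+Y)$, which measures how well $X$ can be recovered from $X+Y$. Writing $S_j=X_1+\dots+X_j$, telescoping gives
\[
\sum_{i=1}^m H(X_i)-H(S_m)=\sum_{j=2}^{m}\bigl(H(S_{j-1})+H(X_j)-H(S_j)\bigr).
\]
It therefore suffices to show that $H(A)+H(B)-H(A+B)\le 30\,(\Energy(A)+\Energy(B))$ for independent finitely supported $A,B$, or even the weaker bound in which one only needs $\Energy(X_j)$ plus energies of the individual summands of $A$. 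Since $\Energy(S_{j-1})$ need not be controlled by $\sum_{i<j}\Energy(X_i)$, we would instead aim for an asymmetric bound of the form
\[
H(A)+H(B)-H(A+B)\le 30\sum_{i} \Energy(X_i),
\]
valid whenever $A$ is a sum of some of the $X_i$ and $B=X_j$. Summing over the $m-1$ steps then produces the factor $m$.

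The core two-variable estimate would go through a multiscale decomposition. For finitely supported $A,B$ the entropy deficit $H(A\mid A+B)$ is finite. We would express it as an integral over scales $r$ of the deficit of entropy gained between scales $r$ and $r/2$: one writes $H(X)=\int_0^\infty (\text{scale-}r\text{ entropy increment})\,dr/r$ via the dyadic/randomly-translated partitions $I(a,r)$, averaging over $a$ (which is exactly why $V_\rho$ averages over $a$). At a fixed scale, the loss of entropy when adding $B$ to $A$ within an interval of length $r$ is bounded by the lack of spread of the summands. Here the key local inequality is an entropy--variance comparison: if conditioning $A$ and $B$ on cells of size $r$ still leaves them with normalized variance $V_A(r),V_B(r)$, then the scale-$r$ increment lost is at most $C(V_A(r)+V_B(r))$. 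This uses that variances add under independent sums, so $V_{A+B}$ dominates, combined with the fact that the conditional entropy increment at a scale is bounded by a constant times the normalized local variance (a measure concentrated on a small subinterval relative to $r$ gains little entropy at that scale). Because the cells of $A$ and of $B$ interact, we would randomize the grid offset $a$ and use concavity of entropy to pass from the sum to the averaged local quantities in the definition of $V_\rho(r)$.

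The main obstacle is the local step. We must show that the information lost at scale $r$, that is, the coincidences of $A+B$ collapsing distinct atoms, is charged to local variance rather than to global structure. For finitely supported measures, atoms force $V(r)\to 0$ as $r\to0$, but the collision at finite scales must be bounded. We would first try to prove the pointwise inequality
\[
H_r(A)+H_r(B)-H_r(A+B)-\bigl(H_{2r}(A)+H_{2r}(B)-H_{2r}(A+B)\bigr)\le C\,(V_A(r)+V_B(r)),
\]
with $H_r$ the grid-averaged entropy at scale $r$. This would use a Cauchy--Schwarz/second-moment argument on the conditional distributions within cells. Integrating against $dr/r$ and tracking constants should yield the bound with the factor $30$.
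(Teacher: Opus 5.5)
There is a genuine gap. The scale-by-scale inequality you plan to prove is false, and no choice of constant, and no shift of the scale at which $V$ is evaluated, repairs it. Take $A,B$ independent and uniform on $\{0,\epsilon\}$, and write $h(p)=-p\log p-(1-p)\log(1-p)$. For $r>2\epsilon$ one computes exactly $H(A;r)=H(B;r)=\frac{\epsilon}{r}\log 2$, since a cut falls between the two atoms with probability $\epsilon/r$. Likewise $H(A+B;r)=\frac{2\epsilon}{r}h(\tfrac14)$. So your left-hand side equals $\frac{\epsilon}{r}\bigl(\log 2-h(\tfrac14)\bigr)\approx 0.13\,\epsilon/r$. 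On the other hand $V_A(s)=V_B(s)\le\epsilon^2/s^2$ for every $s>0$, so the right-hand side is $O(\epsilon^2/r^2)$ at any scale comparable to $r$. The ratio blows up like $r/\epsilon$. With sharp cells, a cluster of diameter $\epsilon$ is split with probability $\asymp\epsilon/r$, so entropy increments and entropy losses at scale $r$ are \emph{first order} in $\epsilon/r$, while normalized variance is second order. Your justification (``a measure concentrated on a small subinterval gains little entropy at that scale'') is therefore quantitatively wrong. Summed over all scales, both sides are $O(1)$ in this example, so the theorem survives. It does so only through a non-local trade-off across scales, which no per-scale Cauchy--Schwarz or second-moment argument can produce.

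This is precisely the idea the paper supplies. It bounds the information revealed at each stage by a first-order quantity: the dispersion $\Disp(Y)=\E|Y-Y'|$ of each summand inside its current cell (Lemma~\ref{lem:cpe}, inserted into the Margulis--Russo formula for $J'(t)$). It converts dispersion into variance only after integrating over all finer scales. From $|Y-Y'|=\int_0^\infty (Y-Y')^2e^{-t|Y-Y'|}\,dt$ one gets $\Disp(Y)\le 2\int_0^\infty v_Y(t)\,dt$ (Lemma~\ref{lem:d_to_v}). Tonelli with the cell-length density $t^2e^{-tr}$ then turns the resulting double integral into $3\Energy(Y)$ (Lemma~\ref{lem:dispersion}).

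Your reduction to $m=2$ has a second, related problem. Your local step is phrased with $V_A$ for $A=S_{j-1}$, so it can only produce $\Energy(S_{j-1})$. You concede this is not controlled by $\sum_{i<j}\Energy(X_i)$, and you give no mechanism for the asymmetric bound you need instead. The paper avoids this in two ways. It uses an independent cut process for each $X_i$, so that conditionally on a product cell the summands remain independent. It also works with dispersion, which is trivially subadditive: $\Disp\bigl(\sum_{j\ne i}X_j\bigr)\le\sum_{j\ne i}\Disp(X_j)$. This subadditivity is where the factor $m$ comes from.
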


To the authors' knowledge, Theorem~\ref{prop:addition} is novel. The inequality claims that addition of random variables can only conceal a lot of information if the summands have sufficient variance across scales. The key idea of the proof is to reveal more and more information of $X_i$ by partitioning the space with a Poisson point process and to apply Margulis-Russo formulas on the derivative of Poisson processes. A sketch of the proof of Theorem~\ref{prop:addition} together with a discussion of the related literature is given at the beginning of Section~\ref{sec:addition}.

We highlight that Theorem~\ref{prop:addition} is far more powerful than necessary to prove Theorem~\ref{thm:main} as any bound of the form $\leq C_m\sum_{i=1}^m\Energy(X_i)$ for some finite constant $C_m$ depending only on $m$ would be sufficient. A bound of this kind with an unspecified constant $C_m$ was established in Lemma 3.2 of \cite{OpenAIMath}*{Paper 148} and we remark that Theorem~\ref{prop:addition} implies a strengthening of Lemma 3.2 of \cite{OpenAIMath}*{Paper 148} with the explicit constant $C_m = 120m^2$. The proof method of Lemma 3.2 of \cite{OpenAIMath}*{Paper 148} leads to a constant of the form $C_m = O(m^{3})$ and, while Lemma 3.2 of \cite{OpenAIMath}*{Paper 148} does not imply Theorem~\ref{prop:addition}, the proof method of Lemma 3.2 of \cite{OpenAIMath}*{Paper 148} can be adapted to prove Theorem~\ref{prop:addition} with the constant $C_m = O(m^3)$. In \cite{KK26Supp} it is shown that a sharpening of the proof of Theorem~\ref{prop:addition} leads to the bound $\leq 62\log m\sum_{i=1}^m\Energy(X_i)$ in \eqref{eq:addition} and that the logarithmic dependence on \(m\) cannot be improved.

We next discuss how to use Theorem~\ref{prop:addition} to prove Theorem~\ref{thm:main}.

\subsection*{Deduction of Theorem~\ref{thm:main} from Theorem~\ref{prop:addition}} For simplicity, we explain the argument for homogeneous self-similar measures. The general case can be deduced with analogous ideas by using appropriate conditioning on the contraction rate. So consider for $1 \leq i \leq k$ and $x \in \mathbb{R}$ the maps $g_i(x)=\lambda x+b_i$ with $\lambda \in (0,1)$ and $b_i \in \mathbb{R}$. A sample from the self-similar measure can by stationarity be written as
\[
Z=\sum_{j=0}^{\infty}\lambda^jD_j
  =B_n+\lambda^nZ' \quad \text{ with } 
\quad
B_n=\sum_{j=0}^{n-1}\lambda^jD_j,
\]
where the random variables \(D_j\) are independent and each takes the value \(b_i\) with probability \(p_i\) and \(Z'\) has law $\nu$ and is independent of $B_n$. Thus the measure decomposes into a finite block of independent digits and an independent rescaled copy of itself. Note that in the homogeneous case $\chi_{\mu} = \log \lambda$.

To prove Theorem~\ref{thm:main} we argue by contradiction and assume that $\dim \nu < \min\{1, h_{\mu}/\log \lambda^{-1}\}$. We will show this leads to the contradiction that $\dim \nu = 1$. By the definition of $h_{\mu}$, the Shannon entropy of \(B_n\) is \(nh_\mu+o(n)\). On the other hand, since $\dim \nu < 1$ by \cite{H}*{Theorem 1.4}, at any fixed exponential scale \(R = C^{-n}\) with \(C>\lambda^{-1}\), it holds that
\[
H(B_n;C^{-n})=n\log\lambda^{-1}\dim \nu+o(n),
\] where $H(B_n;C^{-n})$ is the entropy at scale $R = C^{-n}$ from \cite{Vabs} as recalled in Section~\ref{sec:proof_main}.
Thus it follows for $\delta=h_\mu-\log\lambda^{-1}\dim \nu > 0$ that we have the entropy gap
\begin{equation}\label{entgap}
H(B_n)-H(B_n;C^{-n})=\delta n+o(n).
\end{equation}
In other words, even after identifying the interval of size $R = C^{-n}$ containing \(B_n\), a linear amount of information remains unresolved inside that interval.

Theorem~\ref{prop:addition} converts this unresolved information into variance at scales below \(R= C^{-n}\). To see how to apply Theorem~\ref{prop:addition}, we introduce \(m\) independent copies of \(B_n\) that we denote by $B_n^{(1)}, \ldots ,B_n^{(m)}$. Their sum has the decomposition
\[
\sum_{\ell=1}^{m}B_n^{(\ell)}
=
\sum_{j=0}^{n-1}\lambda^j
\left(\sum_{\ell=1}^{m}D_j^{(\ell)}\right).
\]
If each digit has at most \(q\) possible values, this gives
\begin{equation}\label{trivialentbound}
H\!\left(\sum_{\ell=1}^{m}B_n^{(\ell)}\right)
\le n(q-1)\log(m+1).
\end{equation}
Thus the entropy of the sum, divided by the number of copies, becomes small compared with \(n\) when \(m\) is large.

Apply Theorem 1.3 after conditioning each copy on its interval of length \(R = C^{-n}\). Now take a grid consisting of intervals of length $R = C^{-n}$ and fix the origin of the grid. Let \(Q_\ell\) label the interval containing \(B_n^{(\ell)}\), with \(Q\) denoting the label for $B_n$. Averaging the entropy inequality Theorem~\ref{prop:addition} over these labels therefore gives
\begin{equation}\label{additionbound}
mH(B_n\mid Q)
-H\!\left(\sum_{\ell=1}^m B_n^{(\ell)}\,\middle|\,Q_1,\ldots,Q_m\right)
\le 30m^2\,\E_Q\bigl[\Energy(B_n\mid Q)\bigr].
\end{equation}
Moreover, each conditional law is supported on an interval of length \(R = C^{-n}\), so a tail bound and concavity of \(\Energy_R\) give
\[
\E_Q\bigl[\Energy(B_n\mid Q)\bigr]
\le \E_Q\bigl[\Energy_{C^{-n}}(B_n\mid Q)\bigr]+\frac12
\le \Energy_{C^{-n}}(B_n)+\frac12.
\]
Finally, averaging over the grid origin turns \(H(B_n\mid Q)\) into \(H(B_n)-H(B_n;C^{-n})\). Applying \eqref{additionbound} and \eqref{trivialentbound} and then dividing by \(m\) yields
\[
H(B_n)-H(B_n;C^{-n})
\le
\frac{n(q-1)\log(m+1)}{m}
+30m\,\mathcal E_{C^{-n}}(B_n)+15m.
\]
Choose \(m\) sufficiently large and then let \(n\) tend to infinity. The entropy gap \eqref{entgap} therefore shows
\[
\mathcal E_{C^{-n}}(B_n)\ge cn
\]
for some \(c>0\) and sufficiently large $n$. Crucially, we gain a multiscale variance bound. At the beginning of Section~\ref{sec:proof_main}, we explain how to use this bound to conclude the proof. Indeed, we combine $\mathcal E_{C^{-n}}(B_n)\ge cn$ with a simplified version of the variance summation method from \cite{KK25} to deduce $\dim \nu = 1$, which is a contradiction.

\subsection*{Machine Contribution Statement} On September 27, 2026, GPT-6 Astra found the first proof of Theorem~\ref{thm:main} presented to the second-named author (C.K.). Motivated by a suggestion of Elon Lindenstrauss, GPT-6 Astra was first asked to propose new approaches to simpler proofs of the main results of \cite{H} and \cite{V}, which was successfully achieved after around two hours. Encouraged by the model having apparently rethought the area in two hours, the exact overlaps conjecture was tried and the model succeeded after around two and a half hours. The initial proof included Theorem~\ref{prop:addition} and the deduction of Theorem~\ref{thm:main} from Theorem~\ref{prop:addition} was essentially self-contained. The conversation transcript and the initial model drafts are available at \cite{KK26Supp}.

The second-named author (C.K.) immediately shared the discovery with the first-named author (S.K.), who found a new proof of the deduction of Theorem~\ref{thm:main} from Theorem~\ref{prop:addition} using \cite{H}*{Theorem 1.4} and the variance summation method from \cite{Kfur} and \cite{KK25} while the second-named author (C.K.) was rewriting the rest of the paper. The authors chose to write this new proof in the paper in order to make the paper more easily accessible to experts and to the authors themselves. Therefore, the introduction and Section~\ref{sec:proof_main} are due to the authors, while Section~\ref{sec:addition} is due to GPT-6 Astra and was rewritten by the authors. The connection of the proof of Theorem~\ref{prop:addition} to Margulis-Russo formulas as discussed in Section~\ref{sec:addition} was made by the authors. GPT-6 Astra also assisted with proofreading and stylistic suggestions.

The authors take full responsibility for the correctness of this paper and would be delighted to give talks on the results and to answer any questions.

\subsection*{Lean Formalization}  GPT-6 Astra formalized all results from this paper and all necessary results from previous work including 
\cite{H}*{Theorem 1.4}. The formalization is around 47,000 lines of code and took around 9.5 hours with four agents working in parallel. The Lean source code is available in the accompanying repository \cite{KK26Lean}.

\subsection*{Supplementary Material} We provide supplementary material in \cite{KK26Supp} containing the following:
\begin{enumerate}
  \item An 8-page document with a self-contained proof of \cite{H}*{Theorem 1.1}.
  \item A 10-page document with a self-contained proof of \cite{V}*{Theorem 3}.
  \item A 16-page document with a proof of Theorem~\ref{thm:main} that includes Theorem~\ref{prop:addition} and concludes the proof of Theorem~\ref{thm:main} using ideas similar to (1).
  \item The conversation transcript leading to initial versions of (1), (2), and (3). Minor changes to (1), (2), and (3) were performed after the initial session on September 27, 2026. 
  \item A document generalizing Theorem~\ref{thm:main} to arbitrary dimensions assuming that the rotations of $\mu$ generate a virtually solvable subgroup. This document was generated on the basis of a draft of this paper and contains no significant new ideas.
  \item A document sharpening Theorem~\ref{prop:addition} to the bound $\leq 62 \log m\sum_{i = 1}^m \mathcal{E}(X_i)$ and the logarithmic dependence on \(m\) cannot be improved. This document was generated on the basis of a draft of this paper and contains no significant new ideas.
\end{enumerate}

\subsection*{Acknowledgments} We thank Emmanuel Breuillard, Michael Hochman and Péter Varjú for advice. The first-named author (S.K.) gratefully acknowledges support from the Heilbronn Institute for Mathematical Research. The second-named author (C.K.) holds a Postdoc Mobility Fellowship from the Swiss National Science Foundation (grant number
235409) and thanks the Institute for Advanced Study.
\section{Proof of Theorem~\ref{prop:addition}: Information lost under addition}\label{sec:addition}

In this section we prove Theorem~\ref{prop:addition} and first give an outline of the proof. Let $X_1,\ldots,X_m$ be independent finitely
supported real random variables and write $S=\sum_{i=1}^m X_i$.
By independence,
\[
 \sum_{i=1}^m H(X_i)-H(S)=H(X_1,\ldots,X_m\mid S).
\]
Thus the entropy loss under addition is the uncertainty about the
individual summands that remains after their sum is known. We estimate
this uncertainty by revealing the summands gradually.

For each summand $X_i$, we use an independent family of refining
interval partitions. Indeed, for each $i$ place an independent Poisson point process on $\R\times(0,\infty)$
with intensity $da\,dt$, interpreting a point $(a,t)$ as a cut at
location $a$ introduced at time $t$. For a random variable $Y$ and $P_t$ the partition
formed by all cuts introduced by time $t$ arising from an independent Poisson process we denote by $P_t(Y)$
the cell containing $Y$. Note that the partition given by $P_t$ contains infinitely many points, yet they have no finite accumulation point by the Poisson property. At time $t>0$, there is an average of $t$ cuts per unit length and the gaps between consecutive cuts are exponentially distributed
with mean $1/t$. The partitions are independent of the input variables.
Expressions such as $\E_{P_t}[f(Y\mid P_t(Y))]$
average over both the partitions and the cell containing $Y$.

Using independent partitions
$P_t = (P_t^{(1)},\ldots,P_t^{(m)})$, define
\[
 J(t)=\E_{P_t}\left[H\bigl(P_t^{(1)}(X_1),\ldots,
        P_t^{(m)}(X_m)\mid S\bigr)\right].
\]
Initially, the partitions reveal no information, so $J(0)=0$.
Eventually, they distinguish every point in each finite support.
Consequently,
\[
 \lim_{t\to\infty}J(t)
 =H(X_1,\ldots,X_m\mid S).
\]
It therefore suffices to bound the derivative of $J$ and integrate
over time. Expressions for the derivative of point processes are well-known and referred to in the literature as Margulis-Russo formulas or as perturbation formulas. Such differentiation formulas appeared first for Bernoulli product measures in the work of Esary and Proschan \cite{EP63} and were later independently rediscovered by Margulis \cite{Mar74} and Russo \cite{Rus81} in the study of graph connectivity and percolation. The extension to Poisson processes was established by Zuyev \cite{Zuy92} and we also refer to \cite{LP17}*{Chapter 19} for a systematic exposition. We remark that the remainder of the proof appears to be novel, yet may be considered more natural from the viewpoint of Margulis-Russo formulas.

Indeed, as explained in the proof of Theorem~\ref{prop:addition}, a direct consequence of \cite{LP17}*{Theorem 19.1} is that
\begin{equation}\label{eq:poisson-derivative}
 J'(t)=\E_{P_t}\left[\sum_{i=1}^m\sum_{C \in \mathcal{C}_t}
 \Prob[X\in C]\int_{C_i}
 H(\ind_{\{X_i\le a\}}\mid S,X\in C)\,da\right],
\end{equation}
where $C=C_1\times\cdots\times C_m$ ranges over the partition $\mathcal{C}_t= P_t^{(1)} \times \ldots \times P_t^{(m)}$ and
$X=(X_1,\ldots,X_m)$. Equation \eqref{eq:poisson-derivative} motivates the conditional cumulative entropy defined for random variables $X$ and $Y$ as
\[
 \CPE(X\mid Y)=\int_\R H(\ind_{\{X\le a\}}\mid Y)\,da.
\]
It measures the total uncertainty in threshold tests of $X$ after
$Y$ is known and will be applied with the random variable $S$ conditioned on $C$ in \eqref{eq:poisson-derivative}. We will show in Lemma~\ref{lem:cpe} for $X$ and $Y$ independent finitely supported real random variables, that
\begin{equation}\label{eq:cpe_draft}
 \CPE(X\mid X+Y)\le9\bigl(\Disp(X)+\Disp(Y)\bigr),
\end{equation} where the dispersion $\Disp(X)$ is defined as
\[
 \Disp(X)=\E[|X-X'|],
 \qquad
 \text{for $X'$ is an independent copy of $X$.}
\]

A calculation applying \eqref{eq:cpe_draft} to \eqref{eq:poisson-derivative} leads to the inequality \begin{equation}\label{eq:poisson-bound-draft}
 J'(t)\le9m\sum_i\E_{P_t}\left[\Disp(X_i\mid P_t^{(i)}(X_i))\right].
\end{equation}
Finally, we establish in Lemma~\ref{lem:dispersion} for any random variable $Y$ with independent Poisson point process $P_t$ that
\begin{equation}\label{eq:disp-bound-draft}
\int_0^\infty\E_{P_t}\left[\Disp(Y\mid P_t(Y))\right]\,dt
 \leq 3\Energy(Y).
 \end{equation}
Integrating \eqref{eq:poisson-bound-draft} combined with the bound \eqref{eq:disp-bound-draft} shows 
\[
 \sum_{i=1}^m H(X_i)-H(S)
 =\int_0^\infty J'(t)\,dt
 \le 27m\sum_{i=1}^m \Energy(X_i),
\]
which implies Theorem~\ref{prop:addition} as $27 \leq 30$.

We give the details after proving Lemmas~\ref{lem:cpe}
and~\ref{lem:dispersion}. We begin with some basic properties
of variance energy.

\begin{lemma}\label{lem:variance-basic}
The functionals $F_\rho(I)$, $V_\rho(r)$ and $\Energy_R(\rho)$ are
concave in $\rho$. Moreover, $0\le V_\rho(r)\le1$ and, for $a,b \in \mathbb{R}$ with $a\ne0$,
\[
 V_{aY+b}(r)=V_Y(r/|a|)\quad \text{ and } \quad \Energy(aY+b)=\Energy(Y).
\]
If $\rho$ is finitely supported and its support has diameter at most $D$,
then $\Energy(\rho)<\infty$ and
\begin{equation}\label{eq:energy-tail}
 \Energy(\rho)-\Energy_R(\rho)\le\frac{D^2}{2R^2}.
\end{equation}
\end{lemma}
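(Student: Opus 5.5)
Concavity comes first. For fixed $c$ the map $\rho\mapsto\int_{I}(x-c)^2\,d\rho(x)$ is linear. Hence $F_\rho(I)=\inf_c\int_I(x-c)^2\,d\rho$ is an infimum of linear functionals and so concave in $\rho$. $V_\rho(r)$ is a nonnegative multiple of an integral over $a$ of concave functionals, hence concave. The same holds for $\Energy_R(\rho)$, which integrates $V_\rho(r)$ against the positive measure $dr/r$ on $(0,R)$.

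For the bounds $0\le V_\rho(r)\le 1$, nonnegativity is clear. For the upper bound, take $c$ equal to the midpoint of $I(a,r)$. Then $F_\rho(I(a,r))\le\int_{I(a,r)}(x-c)^2\,d\rho\le \frac{r^2}{4}\rho(I(a,r))$. By Fubini, $\int_\R\rho(I(a,r))\,da=\int\int\ind_{\{a\le x<a+r\}}\,da\,d\rho(x)=r$, so $V_\rho(r)\le\frac4{r^3}\cdot\frac{r^2}{4}\cdot r=1$.

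For the scaling identity, substitute $x=ay+b$. The push-forward of $\rho$ under $y\mapsto ay+b$ restricted to $I(a',r)$ corresponds to $\rho$ restricted to an interval of length $r/|a|$. For $a<0$ this interval is half-open on the wrong side. That is harmless: the endpoints of $I(a',r)$ vary with $a'$, and only countably many $a'$ give an atom on an endpoint, so the integral over $a'$ is unchanged. The squared deviation picks up a factor $a^2$ and the change of variable in $a'$ picks up a factor $|a|$. Hence
\[
\int F_{aY+b}(I(a',r))\,da'=|a|^3\int F_Y(I(a'',r/|a|))\,da'',
\]
and $V_{aY+b}(r)=V_Y(r/|a|)$. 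Invariance of $\Energy$ then follows from the invariance of $dr/r$ under $r\mapsto r/|a|$.

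For the tail bound (which also gives finiteness at large scales), use that $F_\rho(I)\le\int_I(x-c)^2\,d\rho$ for any $c$. Take $c=\bar x$, the mean of $\rho$, so that $|x-\bar x|\le D$ on the support. Then
\[
\int_\R F_\rho(I(a,r))\,da\le\int\int\ind_{\{a\le x<a+r\}}(x-\bar x)^2\,d\rho\,da=r\Var(\rho)\le rD^2/4 .
\]
Actually the cruder bound $\Var\le D^2$ suffices. With $\Var(\rho)\le D^2/4$ we get $V_\rho(r)\le D^2/r^2$, and $\int_R^\infty D^2r^{-3}\,dr=D^2/(2R^2)$, which is exactly \eqref{eq:energy-tail}.

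Finiteness at small scales uses finite support. Let $\delta>0$ be the minimal gap between support points. If $r<\delta$, any $I(a,r)$ contains at most one atom, so $F=0$ and $V_\rho(r)=0$. Thus $\Energy(\rho)\le\int_\delta^R dr/r+D^2/(2R^2)<\infty$.

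The only mildly delicate step is the endpoint bookkeeping in the scaling identity for negative $a$; it is handled by the measure-zero argument above.
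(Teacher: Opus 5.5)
Your proof is correct and follows essentially the same route as the paper: concavity as an infimum of linear functionals, midpoint centering for $V_\rho(r)\le 1$, substitution for the scaling identities (your endpoint bookkeeping for $a<0$ is a detail the paper omits), the bound $V_\rho(r)\le D^2/r^2$ for the tail, and vanishing of $V_\rho(r)$ below the minimal atom gap for finiteness; centering every interval at the global mean $\bar x$, instead of bounding each $\Var(\rho\mid I)\le D^2/4$ as the paper does, is a harmless variant. One small slip: $|x-\bar x|\le D$ only gives $\Var(\rho)\le D^2$, which yields $2D^2/R^2$ rather than $D^2/(2R^2)$, so your aside that the cruder bound suffices is false for the stated constant, and the bound $\Var(\rho)\le D^2/4$ that you actually use should be justified by centering at the midpoint of the convex hull of the support.
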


\begin{proof}
$F_\rho(I)$ is defined by an infimum over integrals over $\rho$ and is therefore concave. The same follows therefore for
$V_\rho(r)$ and $\Energy_R$. Taking the midpoint $c=a+r/2$ in the definition of $F_\rho(I(a,r))$ gives
\[
 0\le F_\rho(I(a,r))\le\frac{r^2}{4}\rho(I(a,r)).
\]
Since $\int_\R\rho(I(a,r))\,da=r$, the factor $4/r^3$ in
\eqref{eq:local-variance} gives $0\le V_\rho(r)\le1$. Substitution gives the affine change of variables formula.

If the support of $\rho$ has diameter at most $D$, then
every conditional law $\rho\mid I$ also has support of
diameter at most $D$, so $\Var(\rho\mid I)\le D^2/4$. Since
$\int_\R\rho(I(a,r))\,da=r$, we have $V_\rho(r)\le D^2/r^2$.
Hence
\[
 \Energy(\rho)-\Energy_R(\rho)
 \le D^2\int_R^\infty\frac{dr}{r^3}
 =\frac{D^2}{2R^2}.
\]
For a finitely supported law, $V_\rho(r)$ also vanishes below the smallest
spacing between atoms. Thus the full energy $\mathcal{E}(\rho)$ is finite. 
\end{proof}

Now we prove Lemma \ref{lem:cpe}.

\begin{lemma}[Conditional cumulative entropy]\label{lem:cpe}
If $X,Y$ are independent finitely supported real random variables, then
\begin{equation}\label{eq:cpe}
 \CPE(X\mid X+Y)\le9\bigl(\Disp(X)+\Disp(Y)\bigr).
\end{equation}
\end{lemma}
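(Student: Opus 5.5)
The plan is to work fiberwise in $S=X+Y$ and then average, using throughout the pointwise bound $H(\ind_{\{X\le a\}}\mid S)=\E_S\bigl[h(p_a(S))\bigr]$. Here $h$ is the binary entropy and $p_a(s)=\Prob[X\le a\mid S=s]$. Since $X$ and $Y$ are finitely supported, all integrals are finite sums over finitely many intervals of $a$, so there are no measurability or convergence issues. The statement is symmetric under $(X,Y,a)\mapsto(Y,X,s-a)$, because on $\{S=s\}$ we have $\{X\le a\}=\{Y\ge s-a\}$. So for each fixed $s$ we may view $a\mapsto p_a(s)$ either as the conditional distribution function of $X$ or, reflected, as that of $Y$. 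Write $G_s$ for this conditional distribution function of $X$ given $S=s$.

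\textbf{Step 1: two competing global bounds.} The cheap bound is $H(\ind_{\{X\le a\}}\mid S)\le h(\Prob[X\le a])$. Combined with $\int q_a(1-q_a)\,da=\Disp(X)/2$ for $q_a=\Prob[X\le a]$, it controls the region of $a$ where $q_a$ is bounded away from $0$ and $1$, but only up to a logarithm in the tails.

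The second bound comes from a comparison event. Let $M$ be a median of $Y$ and set $B'_a=\ind_{\{S\le a+M\}}$, which is a function of $S$. Then
\[
H(\ind_{\{X\le a\}}\mid S)\le H(\ind_{\{X\le a\}}\mid B'_a)\le h(e_a),
\qquad
e_a=\Prob\bigl[\ind_{\{X\le a\}}\neq B'_a\bigr].
\]
A mismatch at threshold $a$ forces $a$ to lie between $X$ and $X+Y-M$, so $\int e_a\,da\le \E|Y-M|\le\Disp(Y)$.

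\textbf{Step 2: removing the logarithm.} Replacing $h(e)$ by $e$ costs a factor $\log(1/e)$. This is exactly the $\log(\Disp(X)/\Disp(Y))$ loss one expects when $Y$ is much more concentrated than $X$. To avoid it, I would not bound $h(e_a)$ directly. Instead I would apply the comparison fiberwise: for fixed $s$, the function $a\mapsto p_a(s)$ is the distribution function $G_s$, and
\[
\int h(p_a(s))\,da
\]
is the cumulative entropy of that conditional law. The targeted inequality is then a one-variable estimate of the form
\[
\int h(G(a))\,da\le C\int\min\{G(a),1-G(a)\}\,da+(\text{tail term}),
\]
used together with the fact that, given $S=s$, two conditionally independent copies satisfy $|X_1-X_2|=|Y_1-Y_2|$. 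Averaging over $s$, the conditional dispersion should be controlled by $\min\{\Disp(X),\Disp(Y)\}$ up to constants, by writing $\E_S\E[|X_1-X_2|\mid S]$ through the joint law of $(X,Y,X',Y')$ on the event $X+Y=X'+Y'$.

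\textbf{Step 3: the tails.} The tail term arises because cumulative entropy is not bounded by dispersion for an arbitrary law; tails like $1/(x\log^2x)$ break this. I would handle it by a dyadic decomposition of the distance from $a$ to the medians of $X$ and $S$. On the $k$-th dyadic shell I would bound the contribution by $\min\{h(q_a),h(e_a)\}$, and use $h(p)\le p\log(e/p)$ together with Markov-type estimates for the shell masses, so that the logarithms are absorbed by the geometric decay across shells. The numerical constant $9$ would come from adding the bulk contribution, about $2(\Disp(X)+\Disp(Y))$, to the tail contributions.

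I expect Step 2 to be the main obstacle, namely obtaining a bound linear in $\Disp(X)+\Disp(Y)$ with no logarithm of their ratio. If the fiberwise dispersion route fails, my fallback is to integrate the chain rule over the threshold instead. That is, I would bound $\int H(\ind_{\{X\le a\}}\mid S,\ind_{\{X\le a'\}})\,da$ along a coarse grid of thresholds $a'$ spaced at the scale of $\Disp(Y)$, so that each grid cell contributes $O(\Disp(Y))$, and the grid itself costs $O(\Disp(X)+\Disp(Y))$ through $\int q(1-q)$.
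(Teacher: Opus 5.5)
\textbf{There is a genuine gap.} Your proposal never supplies a way to remove the logarithm. The concrete estimates it does contain fail by unbounded factors already for two-atom laws, so this is not a tail phenomenon that dyadic shells could repair.

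\textbf{Step 1/Step 3 fails.} Take $X,Y$ i.i.d.\ on $\{0,L\}$ with $\Prob[X=L]=\epsilon$. For $a\in[0,L)$ you get $q_a=1-\epsilon$ and, with median $M=0$, $e_a=\epsilon(1-\epsilon)$. Hence $\int\min\{h(q_a),h(e_a)\}\,da\approx L\epsilon\log(1/\epsilon)$, where $h$ is binary entropy. By contrast, $\Disp(X)+\Disp(Y)\approx4\epsilon L$, and the true value is $\CPE(X\mid X+Y)=2\epsilon(1-\epsilon)L\log 2$. The uncertainty lives only on the rare fibre $\{S=L\}$, which neither of your global bounds sees. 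With only two atoms at distance $L$, there are no shells over which Markov estimates could absorb the logarithm.

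\textbf{Step 2 fails for a complementary reason.} Your observation $\E_S[\Disp(X\mid S)]\le 2\min\{\Disp(X),\Disp(Y)\}$ is correct, but it controls the wrong quantity. Take $\Prob[X=L]=\epsilon$ and $\Prob[Y=L]=\epsilon'\ll\epsilon$. The fibre $\{S=L\}$ has mass $\approx\epsilon$ and conditional masses $\approx(\epsilon'/\epsilon,\,1-\epsilon'/\epsilon)$. So $\CPE(X\mid X+Y)\approx L\epsilon'\log(\epsilon/\epsilon')$, which is not bounded by $C\min\{\Disp(X),\Disp(Y)\}\approx 2C\epsilon' L$ for any fixed $C$. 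The unspecified ``tail term'' in your one-variable inequality would therefore have to carry the entire logarithmic excess and be paid for by the \emph{larger} dispersion. As written, Step 2 is a restatement of the lemma.

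\textbf{The fallback has the same defect.} The cost of the grid indicators is $\sum_{a'}w\,H(\ind_{\{X\le a'\}}\mid S)$. This is either a Riemann sum for $\CPE$ itself or, once bounded by $h(q_{a'})$, it reintroduces the logarithm rather than giving $\int q(1-q)$.

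\textbf{The missing idea} is an estimate of geometric-mean type in the two dispersions. With $F_X(a)=\Prob[X\le a]$, the paper compares $p_s(a)=\Prob[X\le a\mid S=s]$ with the explicit product-form surrogate
\[
 q_s(a)=\frac{F_X(a)(1-F_Y(s-a))}{F_X(a)(1-F_Y(s-a))+(1-F_X(a))F_Y(s-a)}.
\]
By nonnegativity of the Kullback--Leibler divergence, the conditional entropy is at most a cross-entropy. Its integrand at $a=x+u$, $u>0$, is $\log\bigl(1+\frac{1-F_X(x+u)}{F_X(x+u)}\cdot\frac{F_Y(y-u)}{1-F_Y(y-u)}\bigr)$, and symmetrically for $a<x$. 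The bound $\log(1+z)\le\sqrt z$ together with independence splits the expectation into a product of one-variable terms. Each factor is at most $3\sqrt{2T(u)}$, where $T_X(u)=\Prob[X'>X+u]$, by the quantile lemma: if $Z\in[0,1]$ stochastically dominates a uniform variable, then $\E\sqrt{(1-Z)/Z}\le 3\sqrt{2\E[1-Z]}$. Finally, $18\sqrt{T_X(u)T_Y(u)}\le 9(T_X(u)+T_Y(u))$ and $\int_0^\infty T_X(u)\,du=\Disp(X)/2$ finish the proof.

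In your second example the surrogate coincides with the true conditional probability for $a\in(0,L)$. The inequality $\epsilon'\log(\epsilon/\epsilon')\lesssim\sqrt{\epsilon\epsilon'}$ is exactly the behaviour that a min-based or fibrewise-dispersion argument cannot capture.
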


\begin{proof}[Proof of Lemma \ref{lem:cpe}]
We begin the proof with the following inequality. Suppose that a random variable $Z$ satisfying $Z\in[0,1]$ almost surely
dominates a uniform variable. Write $T=\E[1-Z]$ and we then claim that
\begin{equation}\label{eq:quantile}
 \E\left[\sqrt{\frac{1-Z}{Z}} \right]\le3\sqrt{2T}.
\end{equation}

We prove this by splitting into the two cases $\{Z\ge1/2\}$ and $\{Z<1/2\}$.
On $\{Z\ge1/2\}$, the integrand is at most $\sqrt{2(1-Z)}$,
whose expectation is at most $\sqrt{2T}$ by Cauchy--Schwarz.

Now we deal with $\{Z<1/2\}$. Write $q=\Prob[Z<1/2]$ and note $q\le2T$. For $Q(u) = \inf\{z \,:\, \mathbb{P}[Z \leq z] \geq u \}$ with $u \in (0,1)$ note that stochastic domination implies $Q(u) \geq u$. Moreover, $Q(U)$ has the same distribution as $Z$ when $U$ is uniform on $(0,1)$. Therefore
\[
 \E \left[\ind_{\{Z<1/2\}} \sqrt{\frac{1-Z}{Z}} \right] \le \E[Z^{-1/2}\ind_{\{Z<1/2\}}]
 \le\int_0^q u^{-1/2}\,du=2\sqrt q\le2\sqrt{2T}.
\] 
The first inequality is trivial and the second inequality holds since $Q(U)$ has the same distribution as $Z$ and therefore $\E[Z^{-1/2}\ind_{\{Z<1/2\}}] = \int_0^1 Q(u)^{-1/2}1_{\{Q(u)<1/2\}} du$ and then apply $Q(u) \geq u$ and $1_{\{Q(u) < 1/2\}} \leq 1_{\{u \leq q\}}$.

We now write $F_X(a)=\Prob[X\le a]$ and
$T_X(u)=\Prob[X'>X+u]$ for a fixed $u>0$ and $X'$ an independent copy of $X$.
Note that $F_X(X)$ stochastically dominates a uniform variable and therefore the same
holds for the larger variable $F_X(X+u)$. Its expected complement
is $\Prob(X'>X+u)=T_X(u)$, and thus by \eqref{eq:quantile},
\begin{equation}\label{eq:tail-odds}
 \E\left[\sqrt{\frac{1-F_X(X+u)}{F_X(X+u)}}\right]
 \le3\sqrt{2T_X(u)}.
\end{equation}
The reflected estimate follows from
$1-F_Y(Y-u)\ge\Prob(Y'\ge Y\mid Y)$ for $u>0$; the latter variable
is the distribution function of $-Y$ evaluated at $-Y$ and therefore
also dominates uniform. The expected complement of $1-F_Y(Y-u)$ is
$\Prob(Y'\le Y-u)=T_Y(u)$ outside the finite set of positive
differences between atoms. Thus the analogous estimate holds for
$F_Y(Y-u)/(1-F_Y(Y-u))$ for almost every $u>0$.

Let $p_s(a) := \mathbb{P}[X\le a | X+Y=s]$ and let
\[
 q_s(a):=\frac{F_X(a)(1-F_Y(s-a))}
 {F_X(a)(1-F_Y(s-a))+(1-F_X(a))F_Y(s-a)}.
\]

For almost every $a$ we have
\begin{align*}
 H(\ind_{\{X\le a\}}\mid S) & = \E\bigl[-\ind_{\{X\le a\}}\log p_S(a)
             -\ind_{\{X>a\}}\log(1-p_S(a))\bigr] \\
             & = \E\bigl[-\ind_{\{X\le a\}}\log q_S(a)
             -\ind_{\{X>a\}}\log(1-q_S(a))\bigr] \\ & \quad\quad\quad\quad - \mathbb{E}[\mathcal{D}_{KL} (p_S(a) || q_S(a))]\\
 &\le\E\bigl[-\ind_{\{X\le a\}}\log q_S(a)
             -\ind_{\{X>a\}}\log(1-q_S(a))\bigr]
\end{align*}
where $$\mathcal{D}_{KL}(p || q) := p \log \frac{p}{q} + (1-p) \log \frac{1-p}{1-q} \geq 0.$$
Write $$L_a(X, S) := -\ind_{\{X\le a\}}\log q_S(a) -\ind_{\{X>a\}}\log(1-q_S(a)).$$
By the above and Tonelli's theorem we have
$$
\CPE(X\mid X+Y) \le \mathbb{E} \left[ \int_{\R} L_{X+u}(X, X+Y) \, du \right] .
$$

We therefore wish to estimate the integrals
$$
\int_{\R} L_a(x, x+y) \, da.
$$ On $a>x$, write $a=x+u$ with $u>0$. We have
\[
 L_{x+u}(x, x+y) = \log\left(1+
 \frac{1-F_X(x+u)}{F_X(x+u)}
 \frac{F_Y(y-u)}{1-F_Y(y-u)}\right).
\]
Using $\log(1+z)\le\sqrt z$, independence, and
\eqref{eq:tail-odds}, its expectation $\mathbb{E}[L_{X+u}(X, X+Y)]$ is at most
\[
 18\sqrt{T_X(u)T_Y(u)}\le9(T_X(u)+T_Y(u)).
\]
Similarly on $a<x$, write $a=x-u$. We have
\[
 L_{x-u}(x, x+y) = \log\left(1+
 \frac{F_X(x-u)}{1-F_X(x-u)}
 \frac{1-F_Y(y+u)}{F_Y(y+u)}\right).
\]
The reflected estimates give the same bound. Since
\[
 \int_0^\infty T_X(u)\,du=\tfrac12\Disp(X),
\]
integration over both halves proves \eqref{eq:cpe}.
\end{proof}

Before proving Lemma \ref{lem:dispersion} we need the following estimate for variance in terms of $\Disp$. Whilst it is straightforward to find a linear upper bound for the variance of the measure restricted to intervals of a given scale in terms of $\Disp$, it is necessary to look at many scales simultaneously in order to find a linear lower bound for the variance in terms of $\Disp$.

\begin{lemma} \label{lem:d_to_v}
      Suppose that $Y$ is a random variable. Define
      \[
            v_Y(t) := \E [ \Var (Y | P_t(Y)) ].\]
      Then $$\mathcal{D}(Y) \leq 2 \int_{0}^{\infty} v_Y(t) \, dt.$$
\end{lemma}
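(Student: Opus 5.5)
The plan is to write the within-cell variance as a pair expectation and then integrate the no-cut probability over time. First I will express $v_Y(t)$ using two independent copies of $Y$ that share the same partition. Let $Y'$ be an independent copy of $Y$, with both independent of the Poisson process defining $P_t$. For a fixed realisation of the partition and a cell $C$ with $\Prob[Y\in C]>0$, the standard identity $\Var(W)=\tfrac12\E[(W-W')^2]$ for i.i.d.\ $W,W'$ gives
\[
 \Prob[Y\in C]\,\Var(Y\mid Y\in C)=\frac{\E\bigl[(Y-Y')^2\ind_{\{Y\in C,\,Y'\in C\}}\bigr]}{2\,\Prob[Y\in C]}.
\]
Since $\Prob[Y\in C]\le1$, this is at least $\tfrac12\E[(Y-Y')^2\ind_{\{Y,Y'\in C\}}]$. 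Summing over the cells of the fixed partition and then averaging over the partition yields
\[
 v_Y(t)\ \ge\ \frac12\,\E\bigl[(Y-Y')^2\,\ind_{\{P_t(Y)=P_t(Y')\}}\bigr].
\]
Here the expectation is joint over $Y$, $Y'$ and the process.

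Next I will evaluate the probability that $Y$ and $Y'$ fall in the same cell, conditionally on their values. Given $Y=y$ and $Y'=y'$, the two points lie in the same cell of $P_t$ exactly when no cut of time at most $t$ lands strictly between them. A Poisson process of intensity $da\,dt$ restricted to $[0,t]$ in time has $\R$-marginal intensity $t\,da$, so this probability is $e^{-t|y-y'|}$. Boundary conventions on whether cells are half-open do not matter, because the cut locations are almost surely not atoms of $Y$. By the independence of $(Y,Y')$ from the process,
\[
 v_Y(t)\ \ge\ \frac12\,\E\bigl[(Y-Y')^2 e^{-t|Y-Y'|}\bigr].
\]

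Finally I will integrate in $t$ over $(0,\infty)$ and use Tonelli's theorem, which is justified because everything is nonnegative. For $d=|Y-Y'|>0$ we have $\int_0^\infty d^2e^{-td}\,dt=d$, and the integrand vanishes when $d=0$. Hence
\[
 \int_0^\infty v_Y(t)\,dt\ \ge\ \frac12\,\E[|Y-Y'|]=\frac12\Disp(Y),
\]
which is the claim. If $\Disp(Y)=\infty$, the same computation shows that the right-hand side of the lemma is infinite as well.

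The only step needing care is the first inequality, namely the lower bound obtained by discarding the factor $1/\Prob[Y\in C]\ge1$. This loss is exactly what allows the bound to become linear in the pair law, and it costs nothing here because only a lower bound on $v_Y$ is required. The remaining points are to check measurability of $t\mapsto v_Y(t)$ and the Poisson void probability, both of which are routine.
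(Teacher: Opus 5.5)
Your proposal is correct and follows the paper's proof essentially step for step. Both arguments write the within-cell variance through an independent copy $Y'$, bound $\rho(C)^2\Var(\rho\mid C)\le\rho(C)\Var(\rho\mid C)$, use the void probability $e^{-t|Y-Y'|}$, and integrate in $t$ with Tonelli via $\int_0^\infty d^2e^{-td}\,dt=d$.
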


\begin{proof}
      Write $\rho = \mathcal{L}(Y)$ and let $Y'$ be an independent copy of $Y$ also independent of $P_t$. Then we have $$\mathbb{E}[(Y-Y')^2 \ind_{P_t(Y)=P_t(Y')} | P_t] = 2 \sum_{C \in P_t} \rho(C)^2 \Var (\rho | C) \leq  2 \sum_{C \in P_t} \rho(C) \Var (\rho | C).$$
      Two points at distance $d$ lie in the same cell of $P_t$ with probability $e^{-dt}$. Therefore averaging over $P_t$ we get $$ \E [(Y-Y')^2 e^{- t |Y-Y'|}] \leq 2 v_Y(t).$$
      Integrating over $t$ and applying Tonelli's theorem gives the required inequality.
\end{proof}

Next we prove Lemma \ref{lem:dispersion}.

\begin{lemma}[Poisson dispersion]\label{lem:dispersion}
For every finitely supported real random variable $Y$,
\begin{align}\label{eq:poisson-energy}
 \int_0^\infty\E_{P}[\Disp(Y\mid P_t(Y))]\,dt
 &\le2\int_0^\infty t\E_{P}[\Var(Y\mid  P_t(Y))]\,dt\notag\\
 &=3\int_0^\infty V_Y(r)\,\frac{dr}{r}=3\Energy(Y).
\end{align}
\end{lemma}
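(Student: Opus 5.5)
The argument has two parts: an inequality that comes from Lemma~\ref{lem:d_to_v} applied inside each cell, and an identity that comes from computing explicitly how Poisson cells interact with the intervals in the definition of $V_Y$.

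\textbf{The inequality.} Fix $t$ and a realization of $P_t$, and take a cell $C$ of positive mass. Apply Lemma~\ref{lem:d_to_v} to the conditional variable $Y_C=(Y\mid C)$, using a fresh Poisson process $Q_s$ of intensity $s$ that is independent of everything. This gives
\[
\Disp(Y_C)\le 2\int_0^\infty \E\bigl[\Var(Y_C\mid Q_s(Y_C))\bigr]\,ds .
\]
Superposing $P_t$ and $Q_s$ produces a Poisson process of intensity $t+s$. Moreover, on $C$ the cell $Q_s(Y)\cap C$ is exactly the cell of the superposed process. Averaging over $C$ and $P_t$ therefore gives
\[
\E_P[\Disp(Y\mid P_t(Y))]\le 2\int_t^\infty v_Y(u)\,du .
\]
Integrating in $t$ from $0$ to $\infty$ and applying Tonelli, $\int_0^\infty\int_t^\infty v_Y(u)\,du\,dt=\int_0^\infty u\,v_Y(u)\,du$. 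This is the first inequality of \eqref{eq:poisson-energy}.

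\textbf{The identity.} It remains to show $2\int_0^\infty t\,v_Y(t)\,dt=3\Energy(Y)$. First write the within-cell variance as a pairwise sum:
\[
\sum_C\rho(C)\Var(\rho\mid C)=\tfrac12\sum_C\rho(C)^{-1}\iint_{C^2}(x-y)^2\,d\rho\,d\rho .
\]
The factor $\rho(C)^{-1}$ is awkward, so instead use the infimum representation of $F$ over deterministic intervals and work with the law of the cells directly. A cell of $P_t$ is an interval $[a,a+r)$. Its left endpoint and length have joint intensity $t^2e^{-tr}\,da\,dr$: a cut at $a$, the next cut at $a+r$, and no cuts in between.

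So, for fixed $P_t$,
\[
\sum_C\rho(C)\Var(\rho\mid C)=\sum_C F_\rho(C),
\]
and taking expectations,
\[
v_Y(t)=\int_0^\infty\int_\R F_\rho(I(a,r))\,t^2e^{-tr}\,da\,dr
=\int_0^\infty \frac{r^3}{4}V_Y(r)\,t^2e^{-tr}\,dr .
\]
Then
\[
\int_0^\infty t\,v_Y(t)\,dt=\int_0^\infty\frac{r^3}{4}V_Y(r)\int_0^\infty t^3e^{-tr}\,dt\,dr=\int_0^\infty\frac{r^3}{4}V_Y(r)\frac{6}{r^4}\,dr=\frac32\Energy(Y).
\]
Multiplying by $2$ gives $3\Energy(Y)$, as claimed.

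\textbf{Main obstacle.} The delicate step is justifying the Campbell/Mecke formula for the cell intensity $t^2e^{-tr}\,da\,dr$. One also has to check that $F_\rho(C)=\rho(C)\Var(\rho\mid C)$ is the correct weighting, so that there is no leftover $\rho(C)$ factor. This works because $v_Y(t)$ averages over the cell containing $Y$, which contributes exactly the weight $\rho(C)$. Finally, Tonelli applies throughout since every integrand is nonnegative, and finiteness follows from Lemma~\ref{lem:variance-basic}.
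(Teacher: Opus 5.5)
Your proposal is correct and follows the paper's proof essentially step for step. You apply Lemma~\ref{lem:d_to_v} inside each cell with an independent Poisson process and use superposition to get $2\int_t^\infty v_Y(u)\,du$, integrate in $t$ by Tonelli, and then evaluate $\int_0^\infty t\,v_Y(t)\,dt$ using the cell intensity $t^2e^{-tr}\,da\,dr$ and $\int_0^\infty t^3e^{-tr}\,dt=6r^{-4}$. The only differences are that you spell out the cell-by-cell reduction, which the paper leaves implicit, and include a harmless, unused detour through the pairwise-sum formula for the variance.
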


\begin{proof}

Let $P_t'$ be an independent process of Poisson cuts. By Lemma \ref{lem:d_to_v} we have
$$ \mathbb{E}_P[\Disp (Y | P_t(Y))] \leq 2 \int_{s=0}^{\infty} \E_{P, P'} [\Var (Y | P_t(Y), P_s'(Y)) ]\,ds.$$
Since the refinement of two independent Poisson partitions with intensities $t$ and $s$ is a Poisson partition with intensity $t+s$ the right hand side is $2 \int_{s=0}^{\infty} v_{Y}(s+t) \, ds$. Therefore
\begin{align*}
      \int_0^\infty\E_{P}[\Disp(Y\mid P_t(Y))]\,dt & \leq 2 \int_0^\infty t\E_{P}[\Var(Y\mid  P_t(Y))]\,dt.
\end{align*}

At intensity $t$, cells with left endpoint $a$
and length $r$ have intensity $t^2e^{-tr}\,da\,dr$. This is because left endpoints
have intensity $t\,da$, and the distance to the next cut is exponential
of rate $t$. Tonelli's theorem and
$\int_0^\infty t^3e^{-tr}\,dt=6r^{-4}$ therefore give
\begin{align*}
 \int_0^\infty t\E_{P}\Var(Y\mid P_t(Y))\,dt
 &=\int_0^\infty\int_0^\infty\int_\R
       t^3e^{-tr}F_\rho(I(a,r))\,da\,dr\,dt\\
 &=\frac32\int_0^\infty V_\rho(r)\,\frac{dr}{r}.
\end{align*}
This proves
\eqref{eq:poisson-energy}.
\end{proof}

We finish this section by proving Theorem~\ref{prop:addition}.

\begin{proof}[Proof of Theorem~\ref{prop:addition}]
Use the notation $X$, $S$, $P_t$ and $J(t)$ introduced at the
beginning of this section. We have $J(0)=0$. The partitions
eventually distinguish all support points almost surely, so bounded
convergence gives
\[
 \lim_{t\to\infty}J(t)
 =H(X\mid S)=\sum_{i=1}^m H(X_i)-H(S).
\]

To justify \eqref{eq:poisson-derivative}, restrict cuts to a compact
interval $K$ containing all supports. This does not change the
information carried by the cell labels. The restricted cuts form
a Poisson process $\eta_t$ on $E=\{1,\ldots,m\}\times K$ with
intensity $t\nu$, where
$\nu=\sum_i\delta_i\otimes(da|_K)$ is finite.
For a fixed configuration $\eta$, let $L_\eta(X)$ denote its tuple
of cell labels and set $f(\eta)=H(L_\eta(X)\mid S)$.
Then $f$ is measurable, $0\le f\le H(X)$, and
$J(t)=\E[f(\eta_t)]$. Thus \cite{LP17}*{Theorem 19.1}, applied
with base intensity $\lambda=0$, perturbation measure $\nu$ and
$n=1$, gives continuous differentiability and
\[
 J'(t)=\int_E
 \E\bigl[f(\eta_t+\delta_z)-f(\eta_t)\bigr]\,\nu(dz).
\]
At $t=0$ the derivative is understood from the right.
For $z=(i,a)$, adding the cut reveals the threshold test
$\ind_{\{X_i\le a\}}$, so the entropy chain rule gives, for almost
every $a$,
\[
 f(\eta+\delta_{(i,a)})-f(\eta)
 =H(\ind_{\{X_i\le a\}}\mid S,L_\eta(X)).
\]
Expanding over the cell labels and integrating proves
\eqref{eq:poisson-derivative}, since the threshold test is constant
on $X\in C$ whenever $a\notin C_i$.

Fix a product cell $C=C_1\times\cdots\times C_m$ of positive
probability. Conditional on $X\in C$, the coordinates remain
independent. Applying Lemma~\ref{lem:cpe} to $X_i$ and
$\sum_{j\ne i}X_j$ under this conditional law, and using the
triangle inequality for independent copies, gives
\[
 \int_{C_i}
 H(\ind_{\{X_i\le a\}}\mid S,X\in C)\,da
 \le 9\sum_{j=1}^m\Disp(X_j\mid X_j\in C_j).
\]
Substitute this into \eqref{eq:poisson-derivative}, sum over $i$
and average over the cells. The marginal cell weights give
\[
 J'(t)\le9m\sum_{i=1}^m
 \E_{P_t}\left[\Disp(X_i\mid P_t^{(i)}(X_i))\right].
\]
Integrating and applying Lemma~\ref{lem:dispersion}, we conclude
\[
 \sum_{i=1}^m H(X_i)-H(S)
 =\int_0^\infty J'(t)\,dt
 \le27m\sum_{i=1}^m\Energy(X_i),
\]
which proves the theorem since $27\le30$.
\end{proof}

\section{Proof of the main theorem}\label{sec:proof_main}

Throughout this section, as in the introduction, let $\mu=\sum_{i=1}^k p_i\delta_{g_i}$ be supported
on distinct contracting similarities $g_i(x)=r_ix+b_i$ of $\R$, with
$0<|r_i|<1$, $b_i\in \mathbb{R}$ and $p_i>0$. For a similarity $g(x)=rx+b$, write
$\rho(g)=r$ and $b(g)=b$ and denote $\rho_{\min} = \min_{1\leq i \leq k} |r_i|$. Let $X_1,X_2,\ldots$ be independent with
law $\mu$ and write
\[
G_n=X_1\cdots X_n,\qquad R_n=\rho(G_n),\qquad B_n=b(G_n).
\]
We use the same notation at stopping times for the natural filtration
of $(X_n)$.

An important fact  used throughout this section, as proved in \cite{Kfur}*{Lemma 2.19}, is that if $\tau$ is a
bounded stopping time and $Z'$ has law $\nu$ independently of the
stopped walk, then
\begin{equation}\label{eq:stationarity}
 B_\tau+R_\tau Z'\ \quad \text{has law }\nu.
\end{equation}

We now give an outline of the proof of Theorem~\ref{thm:main} and consider for simplicity the homogeneous case \(g_i(x)=\lambda x+b_i\) with \(\lambda \in (0,1)\). It is assumed for a contradiction that $\dim \nu < \min\{1,\frac{h_{\mu}}{|\chi_{\mu}|} \}$ and we aim to show that $\dim \nu = 1$. Recall that we discussed in the introduction how Theorem~\ref{prop:addition} together with \cite{H}*{Theorem 1.4} implies that for \(B_n=\sum_{\ell=0}^{n-1}\lambda^\ell D_\ell\) and $C > \lambda^{-1}$ that there is $c > 0$ such that for sufficiently large $n$
\[
\mathcal E_{C^{-n}}(B_n)
=\int_0^{C^{-n}}V_{B_n}(s)\,\frac{ds}{s}
\ge cn.
\]
Denote \(d=(n+1)\log\lambda^{-1}\). For \(u\in[0,d)\), consider the grid of scales \(e^{u+kd}\) with \(k\in\mathbb Z\). Averaging over \(u\) gives
\[
\frac1d\int_0^d
\sum_{\substack{k\in\mathbb Z\\e^{u+kd}<C^{-n}}}
V_{B_n}(e^{u+kd})\,du
=\frac1d\int_0^{C^{-n}}V_{B_n}(s)\,\frac{ds}{s}
\ge\frac{cn}{d}.
\]
Thus some grid has total variance contribution at least \(cn/d\). Since \(V_{B_n}(s)=0\) for sufficiently small \(s\), retaining the nonzero contributions gives finitely many scales \(s_1<\cdots<s_m<C^{-n}\) satisfying
\[
s_{j+1}\ge\lambda^{-(n+1)}s_j
\quad \text{ and } \quad 
\sum_{j=1}^m V_{B_n}(s_j)
\ge\frac{cn}{(n+1)\log\lambda^{-1}}
\ge\frac{c}{2\log\lambda^{-1}}.
\]
Each \(V_{B_n}(s_j)\) is the average conditional variance obtained by conditioning \(B_n\) on its cell in a randomly shifted grid of mesh \(s_j\), multiplied by \(4/s_j^2\).

To bring these contributions to a common scale \(r\le s_1\), choose integers \(k_j\ge0\) satisfying
\[
\lambda\frac r{s_j}<\lambda^{k_j}\le\frac r{s_j}.
\]
The separation of the scales ensures that the digit blocks \([k_j,k_j+n)\) are disjoint. Thus the random-series representation of \(\nu\) contains the independent summands
\[
S=\sum_{j=1}^m\lambda^{k_j}B_n^{(j)},
\]
where the \(B_n^{(j)}\) are independent copies of \(B_n\). $S$ is therefore part of a sample of $\nu$ and the unused digits form an independent remainder.

Condition each \(B_n^{(j)}\) on its cell in an independently shifted grid of mesh \(s_j\), and let \(\mathcal F\) record the shifts and cell labels. This fixes the coarse location of each block, leaving its fluctuation inside an interval of length \(s_j\). The blocks remain conditionally independent. After centering and rescaling, the variables
\[
X_j=\lambda^{k_j}\bigl(B_n^{(j)}
-\mathbb E[B_n^{(j)}\mid\mathcal F]\bigr)
\]
satisfy \(|X_j|\le\lambda^{k_j}s_j\le r\). Thus conditioning turns the local variance into the variance of small independent summands. Moreover,
\[
\begin{aligned}
\frac4{r^2}\mathbb E[\operatorname{Var}(S\mid\mathcal F)] &= \frac{4}{r^2} \sum_{j = 1}^m \mathbb{E}[\operatorname{Var}(X_j\mid\mathcal F)] \\
&=\sum_{j=1}^m\frac{\lambda^{2k_j}s_j^2}{r^2}\,V_{B_n}(s_j)\ge\lambda^2\sum_{j=1}^mV_{B_n}(s_j)
\ge\frac{c\lambda^2}{2\log\lambda^{-1}}.
\end{aligned}
\]
No mass is discarded as the original law is a mixture of these conditional laws and centering only translates each component.

For a fixed conditional component, denote \(\sigma^2=\sum_j\operatorname{Var}(X_j\mid\mathcal F)\). Since the \(X_j\) are centered, conditionally independent and satisfy \(|X_j|\le r\), a suitable version of the Berry–Esseen theorem approximates \(\sigma^{-1}\sum_jX_j\) by a standard Gaussian with Wasserstein error bounded by
\[
C\frac{\sum_j\mathbb E[|X_j|^3\mid\mathcal F]}{\sigma^3}
\le C\frac r\sigma,
\]
for $C$ an absolute constant. Thus a good approximation requires the total variance \(\sigma^2\) to be large compared with \(r^2\). Taking \(r\to0\) alone does not guarantee this, since \(\sigma\) may decrease at the same rate.

The key point allowing us to repeat the construction is that \(C>\lambda^{-1}\). Indeed, if \(a>0\) is the smallest scale already selected, we can choose a new block length \(N\) sufficiently large satisfying \(C^{-N}<\lambda^{N+1}a\). Applying the energy estimate to \(B_N\) then supplies a new group of scales below \(C^{-N}\), with the same uniform positive lower bound on their total variance contribution. This separates the new scales sufficiently from the previous ones that, upon placing all groups at a sufficiently small common scale \(r\), their associated digit blocks remain disjoint. Iterating gives arbitrarily many independent groups, each contributing a fixed positive amount to the expected variance normalized by \(r^2\). Using the latter, we will conclude $\dim \nu = 1$, which is a contradiction and therefore shows that $\dim \nu = \min\{1,\frac{h_{\mu}}{|\chi_{\mu}|} \}$.

To make these arguments also work in the non-homogeneous case, we work with conditioning on the rate of contraction at step $n$. To do so, we use conditional laws. Indeed, for random variables $Y$ and $T$, we write
$\cL(Y\mid T)$ for its conditional law, that is, the a.s.-unique
$\sigma(T)$-measurable random probability measure satisfying
\[
 \E[\mathbb{I}_E\cL(Y\mid T)(A)]=\Prob(Y\in A,E)
\]
for every Borel set $A\subset\R$ and every $E\in\sigma(T)$.

In Section~\ref{sec:entlocvar}, we use Theorem~\ref{prop:addition} and \cite{H}*{Theorem 1.4} to deduce the suitably conditional version of $\mathcal E_{C^{-n}}(B_n) \geq cn$ as outlined in the introduction. We highlight that Section~\ref{sec:entlocvar} is the core novelty of the proof in the deduction of Theorem~\ref{thm:main} from Theorem~\ref{prop:addition}. In Section~\ref{sec:Wdef} we introduce the quantity $W(\lambda; r)$ tracking the amount of variance at scale $r$ the measure $\lambda$ has and show that $W(\lambda; r) \to 0$ as $r\to 0$ implies that $\dim \lambda = 1$. The simplified version of the variance summation method of \cite{KK25} is discussed in Section~\ref{sec:VarianceSumm}. Finally, we conclude the proof of Theorem~\ref{thm:main} in Section~\ref{sec:conclusion}.

\subsection{Entropy and local variance}\label{sec:entlocvar}
We use the same notion of entropy at a scale as
Varj\'u \cite{Vabs}*{Sections~1.2 and~2.2}, which measures entropy by averaging over
translations. The concept originates in Wang \cite{Wan11}*{Section~4.1}.
For a integrable real-valued random variable $Y$, set
\begin{equation}\label{eq:scale-entropy}
 H(Y;r)=\frac1r\int_0^r
 H\left(\left\lfloor\frac{Y+t}{r}\right\rfloor\right)\,dt.
\end{equation}
This is concave in the law of $Y$. As will be important later, if $Y$ is finitely supported and
$Z_t=\lfloor(Y+t)/r\rfloor$, the chain rule for entropy gives $H(Y) = H(Y,Z_t) = H(Z_t) + H(Y|Z_t)$ and therefore
\begin{equation}\label{eq:conditional-entropy}
 H(Y)-H(Y;r)=\frac1r\int_0^r H(Y\mid Z_t)\,dt.
\end{equation} We also use Varj\'u's notion of entropy between two scales for some $C > 0$,
\[
 H(Y;r\mid Cr):=H(Y;r)-H(Y;Cr).
\]The following is stated in
\cite{Vabs}.
\begin{lemma}(\cite{Vabs}*{Lemma 10})\label{lemma:entropy_concave}
Suppose that $C>1$ is an integer. Then $H(Y;r\mid Cr)$ is concave
in the law of $Y$.
\end{lemma}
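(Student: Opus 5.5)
We need to prove that for an integer $C>1$, the map $\rho\mapsto H(Y;r\mid Cr)$ is concave in the law $\rho$ of $Y$. The plan is to write this quantity, for each fixed translation, as an average of conditional entropies. Conditional entropy $H(A\mid B)$ is concave in the joint law of $(A,B)$ whenever $B$ is a deterministic function of $A$. Indeed, for a mixture $\rho=\sum_k w_k\rho_k$, let $K$ be the mixture index. Then $H_\rho(A\mid B)\ge H_\rho(A\mid B,K)=\sum_k w_k H_{\rho_k}(A\mid B)$, since conditioning reduces entropy.

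First I unwind the definition \eqref{eq:scale-entropy}:
\[
H(Y;r)-H(Y;Cr)=\frac1r\int_0^r H\Bigl(\Bigl\lfloor\tfrac{Y+t}{r}\Bigr\rfloor\Bigr)dt-\frac1{Cr}\int_0^{Cr} H\Bigl(\Bigl\lfloor\tfrac{Y+s}{Cr}\Bigr\rfloor\Bigr)ds .
\]
I would substitute $s=t+jr$ with $t\in[0,r)$ and $j\in\{0,\dots,C-1\}$; this is where the integrality of $C$ is used. The second term becomes
\[
\frac1r\int_0^r \frac1C\sum_{j=0}^{C-1}H\Bigl(\Bigl\lfloor\tfrac{Y+t+jr}{Cr}\Bigr\rfloor\Bigr)dt .
\]
Set $Z_t=\lfloor (Y+t)/r\rfloor$. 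Because $C$ is an integer, the identity $\lfloor \lfloor x\rfloor/C\rfloor=\lfloor x/C\rfloor$ gives
\[
W_{t,j}:=\Bigl\lfloor\tfrac{Y+t+jr}{Cr}\Bigr\rfloor=\Bigl\lfloor\tfrac{Z_t+j}{C}\Bigr\rfloor ,
\]
which is a deterministic function of $Z_t$. Therefore $H(Z_t)=H(Z_t,W_{t,j})=H(W_{t,j})+H(Z_t\mid W_{t,j})$ for every $j$. Averaging over $j$ and $t$ yields
\[
H(Y;r\mid Cr)=\frac1r\int_0^r\frac1C\sum_{j=0}^{C-1}H\bigl(Z_t\mid W_{t,j}\bigr)\,dt .
\]

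By the observation in the first paragraph, each integrand is concave in the law of $Y$. The law of $(Z_t,W_{t,j})$ depends linearly on the law of $Y$, and $W_{t,j}$ is a function of $Z_t$. Averages of concave functionals are concave, which gives the lemma.

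The only technical point is finiteness. If $Y$ is integrable, then $H(Z_t)<\infty$, so the subtraction is legitimate. For possibly infinite entropies I would restrict to laws of finite $H(\lfloor Y\rfloor)$, which integrability guarantees, and otherwise argue verbatim. The main thing to get right is the reindexing step with the integer $C$; the rest is routine.
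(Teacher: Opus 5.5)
Your proof is correct: the reindexing $s=t+jr$ together with $\lfloor (Y+t+jr)/(Cr)\rfloor=\lfloor (Z_t+j)/C\rfloor$ writes $H(Y;r\mid Cr)$ as an average of conditional entropies $H(Z_t\mid W_{t,j})$, each of which is concave by your mixture-index argument. The floor identity is where the integrality of $C$ is genuinely used, since it makes each coarse grid a coarsening of the fine one.

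The paper gives no proof of its own and simply cites Varj\'u, and your argument is the standard proof of this fact. Your mixture-index step also works verbatim for integral mixtures, and this is how the paper later uses the lemma for disintegrations, e.g.\ in the proof of Proposition~\ref{prop:w_to_zero}.
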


We use the following estimate from Hochman.
\begin{lemma}[Hochman's entropy estimate]\label{lem:hochman}
If $\dim \nu<1$, then, for every $C>e^{|\chi_{\mu}|}$,
\[
 H(B_n;C^{-n})=n|\chi_{\mu}| \dim \nu+o(n).
\]
\end{lemma}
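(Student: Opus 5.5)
The plan is to reduce the lemma to \cite{H}*{Theorem 1.4}, read in its entropy form. That form says: if $\dim\nu<1$, then for the random translation part $\mu^{(n)}$ of the walk stopped when the contraction first drops below $2^{-n}$, one has $\frac1n H(\mu^{(n)},\mathcal D_{qn}\mid\mathcal D_n)\to0$ for every $q>1$, where $\mathcal D_m$ is the dyadic partition of mesh $2^{-m}$. I would prove the two inequalities $H(B_n;C^{-n})\ge n|\chi_\mu|\dim\nu-o(n)$ and $H(B_n;C^{-n})\le n|\chi_\mu|\dim\nu+o(n)$ separately.

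\emph{Step 1 (scale $e^{-n|\chi_\mu|}$).} By the law of large numbers, $\log|R_n|=n\chi_\mu+o(n)$ with probability $1-o(1)$. Write $\nu$ as the law of $B_n+R_nZ'$ via \eqref{eq:stationarity}, and use exact dimensionality, which gives $H(\nu;r)=\dim\nu\,\log r^{-1}+o(\log r^{-1})$. Comparing $B_n$ with $B_n+R_nZ'$ at scale $e^{-n|\chi_\mu|+\varepsilon n}$ changes the entropy by only $O(\varepsilon n)+o(n)$. The perturbation $R_nZ'$ is then smaller than the scale, except on an event of small probability. On that event the entropy is bounded by $n\log k$ times its probability, and I would make this $o(n)$ using large deviations for $\log|R_n|$. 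This gives $H(B_n;e^{-n|\chi_\mu|})=n|\chi_\mu|\dim\nu+o(n)$ after letting $\varepsilon\to0$.

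\emph{Step 2 (lower bound at scale $C^{-n}$).} Since $C^{-n}<e^{-n|\chi_\mu|}$, refining the grid only increases entropy up to an additive $O(1)$, so the lower bound follows at once from Step 1.

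\emph{Step 3 (upper bound via Hochman).} It remains to show $H(B_n;C^{-n}\mid e^{-n|\chi_\mu|})=o(n)$, that is, no linear entropy is gained between these two scales. Let $m\approx n|\chi_\mu|/\log2$ and introduce the stopping time $\tau$, the first time with $|R_\tau|<2^{-m}$; then $|\tau-n|=o(n)$ with high probability. Conditioning on the first $\min(\tau,n)$ steps, $B_n$ and $B_\tau$ differ by a term of size at most $\rho_{\min}^{-|\tau-n|}2^{-m}\cdot O(1)$. The information in the extra $|\tau-n|$ digits costs at most $|\tau-n|\log k=o(n)$ entropy, using the chain rule together with the bound $H(\tau)=o(n)$ on the window. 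Hence
\[
H(B_n;C^{-n}\mid e^{-n|\chi_\mu|})\le H(B_\tau;C^{-n}\mid 2^{-m})+o(n).
\]
Translating the randomized-grid entropy \eqref{eq:scale-entropy} into dyadic entropy costs $O(1)$, with $C^{-n}=2^{-qm}$ for $q=\log C/|\chi_\mu|>1$. Hochman's theorem then makes the right-hand side $o(m)=o(n)$.

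The main obstacle is Step 3 in the non-homogeneous case: matching the fixed-time $B_n$ with Hochman's stopped measure, and controlling the exceptional events where $\tau$ deviates linearly from $n$. I would handle the latter with exponential large-deviation bounds for the i.i.d.\ sum $\log|R_n|$, which make the entropy contribution of these events $e^{-cn}\cdot O(n)$.
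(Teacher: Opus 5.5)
Your argument is correct, but it is organized differently from the paper's. The paper reads \cite{H}*{Theorem~1.4 and the discussion following it} directly as a statement about the fixed-time walk and about the \emph{total} entropy at the fine scale: $H(\lfloor 2^{qn'}B_n\rfloor,R_n)=n|\chi_\mu|\dim\nu+o(n)$ with $n'=n|\chi_\mu|/\log 2$. What remains there is bookkeeping. $R_n$ is discarded since $H(R_n)\le k\log(n+1)$, integer parts of the levels cost $O(1)$, and two translated grids of equal mesh have entropies within $\log 2$. Averaging over the shift with $q=\log C/|\chi_\mu|$ then finishes.

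You use only the between-scales part of Hochman's theorem, in a stopped-walk normalization, and prove the rest yourself. First you get the coarse-scale identity $H(B_n;e^{-n|\chi_\mu|})=n|\chi_\mu|\dim\nu+o(n)$ from exact dimensionality, stationarity at time $n$, and the law of large numbers for $\log|R_n|$. Then you transfer between $B_n$ and $B_\tau$ by the chain rule.

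This is longer than the paper's proof, but more self-contained. It also does not depend on which normalization of Hochman's result you have at hand: fixed-length words with $R_n$ recorded, as in the paper, or words stopped at contraction $2^{-m}$, as in your proposal. In effect, it spells out what the paper compresses into the phrase ``the discussion following it''.

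Two minor remarks on your Step 3. First, to reconstruct $B_n$ from $B_\tau$ and the extra $|\tau-n|$ digits (or conversely), you also need the contraction $R_{\min(\tau,n)}$ that scales the extra word. It takes polynomially many values, so it costs only $O(\log n)$, just like $\tau$. Second, large deviations are not needed. All entropies involved are $O(n)$ deterministically and $|\tau-n|=O(n)$, so the law of large numbers already makes every error term $o(n)$: exceptional events have probability $o(1)$, and $\E|\tau-n|=o(n)$.
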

\begin{proof}
Let $n'=n|\chi_\mu|/\log 2$. Hochman's theorem
\cite{H}*{Theorem~1.4 and the discussion following it}, after converting
to natural logarithms, gives
\[
 H\bigl(\lfloor2^{qn'}B_n\rfloor,R_n\bigr)
 =n|\chi_\mu|\dim\nu+o(n)\qquad(q>1).
\]
Integer parts of the partition levels change the entropy by $O(1)$.
Clearly $H(R_n)\le k\log(n+1)=o(n)$. We may therefore discard $R_n$.
Any two translated grids of the same mesh have entropies differing
by at most $\log 2$, since each cell of either grid meets at most two
cells of the other. Averaging over the translation and taking
$q=\log C/|\chi_\mu|$ proves the claim.
\end{proof}

We next use Theorem~\ref{prop:addition} to prove a general inequality for finitely supported random variables.

\begin{proposition}\label{prop:finite-sum}
Suppose $Y=\sum_{j=1}^L a_jD_j$, where the $a_j$ are real constants
and the $D_j$ are independent real random variables, each taking at
most $K$ values. For every $R>0$ and positive integer $m$,
\begin{equation}\label{eq:finite-sum}
 \cE_R(Y)\ge\frac1{30m}\left(
 H(Y)-H(Y;R)-\frac{L(K-1)\log(m+1)}m\right)-\frac12.
\end{equation}
\end{proposition}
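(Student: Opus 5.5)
We follow the sketch in the introduction and apply Theorem~\ref{prop:addition} to $m$ independent copies of $Y$, each conditioned on its cell in a randomly translated grid of mesh $R$.

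\textbf{Setup.} Fix a shift $t\in[0,R)$ and let $Y^{(1)},\ldots,Y^{(m)}$ be independent copies of $Y$. Write $Q_\ell=\lfloor (Y^{(\ell)}+t)/R\rfloor$ for the cell labels. Conditional on $(Q_1,\ldots,Q_m)=(q_1,\ldots,q_m)$, the $Y^{(\ell)}$ remain independent and finitely supported. Applying Theorem~\ref{prop:addition} to these conditional laws and averaging over the labels gives
\[
 m\,H(Y\mid Q)-H\Bigl(\sum_\ell Y^{(\ell)}\Bigm| Q_1,\ldots,Q_m\Bigr)\le 30m\,\E\Bigl[\sum_\ell \Energy(Y^{(\ell)}\mid Q_\ell)\Bigr]=30m^2\,\E_Q[\Energy(Y\mid Q)].
\]

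\textbf{Bounding the entropy of the sum.} Conditioning only lowers entropy, so the subtracted term is at most $H(\sum_\ell Y^{(\ell)})$. Write $\sum_\ell Y^{(\ell)}=\sum_{j=1}^L a_j\sum_\ell D_j^{(\ell)}$. Each inner sum is determined by the counts of how many of the $m$ copies take each of the at most $K$ values of $D_j$. The number of such count vectors is at most $(m+1)^{K-1}$, since $K-1$ counts determine the last. Independence across $j$ then gives $H(\sum_\ell Y^{(\ell)})\le L(K-1)\log(m+1)$.

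\textbf{Bounding the energy term.} Each conditional law $\mathcal{L}(Y\mid Q=q)$ is supported in an interval of length $R$, so its diameter is at most $R$. By \eqref{eq:energy-tail},
\[
\Energy(Y\mid Q=q)\le \Energy_R(Y\mid Q=q)+\tfrac12 .
\]
By concavity of $\Energy_R$ (Lemma~\ref{lem:variance-basic}), and since $\mathcal{L}(Y)$ is the $Q$-mixture of the conditional laws, $\E_Q[\Energy_R(Y\mid Q)]\le \Energy_R(Y)$. Combining,
\[
 m\,H(Y\mid Q_t)\le L(K-1)\log(m+1)+30m^2\bigl(\Energy_R(Y)+\tfrac12\bigr).
\]

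\textbf{Averaging over the shift.} Average the last inequality over $t\in[0,R)$. By \eqref{eq:conditional-entropy}, the average of $H(Y\mid Q_t)$ equals $H(Y)-H(Y;R)$. Dividing by $30m^2$ and rearranging gives exactly \eqref{eq:finite-sum}.

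\textbf{Main obstacle.} The delicate step is the conditioning. Theorem~\ref{prop:addition} must be applied cell-by-cell with the correct weights, and the passage from conditional energies to $\Energy_R(Y)$ must be justified. This needs three ingredients: the labels are independent across copies; the tail bound \eqref{eq:energy-tail} requires the conditional support diameter to be at most $R$, which holds since grid cells have length $R$; and concavity of $\Energy_R$ is used as Jensen's inequality for the mixture over cells.
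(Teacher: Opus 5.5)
Your proposal is correct and follows essentially the same route as the paper: you bound $H(\sum_\ell Y^{(\ell)})$ by counting digit multiplicities, apply Theorem~\ref{prop:addition} conditionally on the grid labels $Q_1,\ldots,Q_m$ for a fixed shift $t$, control each conditional full energy by the tail bound with $D=R$ together with concavity of $\Energy_R$, and average over $t$ via \eqref{eq:conditional-entropy}. One small remark: the bound $H(\sum_\ell Y^{(\ell)})\le L(K-1)\log(m+1)$ needs only subadditivity of entropy, not independence across $j$.
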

\begin{proof}
Take independent copies of the digit vector and put
$Y_i=\sum_{j=1}^L a_jD_j^{(i)}$ and $S=\sum_{i=1}^mY_i$.
For each $j$, the sum $\sum_iD_j^{(i)}$ is determined by the
multiplicities of its possible values. The first $K-1$ multiplicities
determine the last, so
\[
 H(S)\le L(K-1)\log(m+1).
\]
Fix $t\in[0,R)$ and write $Q_i=\lfloor(Y_i+t)/R\rfloor$ and
$Q=\lfloor(Y+t)/R\rfloor$. Conditioned on $(Q_1,\ldots,Q_m)$,
the $Y_i$ remain independent. Apply Theorem~\ref{prop:addition}, average over
the $Q_i$, and use $H(S\mid Q_1,\ldots,Q_m)\le H(S)$ to obtain
\[
 mH(Y\mid Q)\le H(S)+30m^2\E\bigl[\cE(\cL(Y\mid Q))\bigr].
\]
Each conditional law is supported on an interval of length $R$.
The tail bound and concavity from Lemma~\ref{lem:variance-basic} with $D = R$ therefore give
\[
 \E\bigl[\cE(\cL(Y\mid Q))\bigr]
 \le\E\bigl[\cE_R(\cL(Y\mid Q))\bigr]+\frac12
 \le\cE_R(Y)+\frac12.
\]
Average over $t$, use \eqref{eq:conditional-entropy}, and divide by
$m$. Rearranging gives \eqref{eq:finite-sum}.
\end{proof}

We now convert Lemma~\ref{lem:hochman} and Proposition~\ref{prop:finite-sum} to an estimate of the conditional variance energy after conditioning on $R_n$.

\begin{corollary}\label{cor:fine-variance}
Suppose that $\dim \nu<\min\{1,h_\mu/|\chi_{\mu}|\}$. Then there is $\eta>0$, depending
only on $\mu$, such that, for every $C>e^{|\chi_{\mu}|}$ and all sufficiently
large (in terms of $C$) $n$,
\begin{equation}\label{eq:fine-variance}
 \E\bigl[\cE_{C^{-n}}(\cL(B_n\mid R_n))\bigr]\ge\eta n.
\end{equation}
\end{corollary}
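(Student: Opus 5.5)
We apply Proposition~\ref{prop:finite-sum} to the conditional law $\cL(B_n\mid R_n=\rho)$ for each value $\rho$ of $R_n$, and then average over $\rho$. Conditioned on the sequence of contraction ratios (equivalently, on the indices of the $r_i$ chosen at each step), $B_n=\sum_{j=1}^n \rho(X_1\cdots X_{j-1})\,b(X_j)$ is a sum of $n$ independent terms. Each term has the form $a_jD_j$, where $a_j$ is a deterministic product of ratios and $D_j$ takes at most $K=k$ values. Conditioning only on $R_n$ gives a mixture of such laws. To stay within the hypotheses of Proposition~\ref{prop:finite-sum}, we instead condition on the full ratio sequence $\omega=(\rho(X_1),\ldots,\rho(X_n))$. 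Since $\cE_R$ is concave (Lemma~\ref{lem:variance-basic}), we have
\[\E[\cE_{C^{-n}}(\cL(B_n\mid R_n))]\ge \E[\cE_{C^{-n}}(\cL(B_n\mid\omega))],\]
so it suffices to bound the right-hand side from below.

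For fixed $\omega$, Proposition~\ref{prop:finite-sum} with $L=n$, $R=C^{-n}$ gives
\[\cE_{C^{-n}}(\cL(B_n\mid\omega))\ge\frac1{30m}\Bigl(H(B_n\mid\omega)-H(\cL(B_n\mid\omega);C^{-n})-\frac{n(k-1)\log(m+1)}m\Bigr)-\frac12.\]
We then average over $\omega$. For the first term, $H(B_n\mid\omega)\ge H(B_n)-H(\omega)$. The number of ratio types is at most the number of distinct values among the $r_i$, but $H(\omega)$ could still be of order $n$, which is too large. The fix is to condition on the \emph{type} (the counts of each ratio) rather than the full sequence. A type has $H\le k\log(n+1)=o(n)$, and given a type the sequence is uniformly distributed among its arrangements, so the conditional law is a mixture over arrangements. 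To handle this, we apply concavity of $\cE_R$ again, but now in reverse: the entropy quantity we need is $H(B_n)-H(B_n;C^{-n})$, and this is not concave. We therefore proceed as follows. Write $H(B_n\mid\omega)-H(B_n;C^{-n}\mid\omega)$ averaged over $\omega$. By \eqref{eq:conditional-entropy}, this equals $\frac1R\int_0^R H(B_n\mid Z_t,\omega)\,dt$, which is at least $\frac1R\int H(B_n\mid Z_t)\,dt-H(\omega)$. Here we would still lose $H(\omega)$.

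The main obstacle is this loss of $H(\omega)$, which can be of order $n$. I would resolve it by observing that $h_\mu$ concerns the entropy of $G_n$, not of $B_n$, and that $H(G_n)=H(B_n,R_n)\le H(B_n)+o(n)$. In fact, the right choice is to apply Proposition~\ref{prop:finite-sum} to $\cL(B_n\mid R_n)$ directly. Given $R_n$, we represent $B_n$ by writing the sum over the finitely many ratio types and digits as the conditional law of a sum of $n$ independent-in-appearance digits. This is where care is needed. If that is too hard, we fall back to conditioning on $\omega$ and instead use $h_\mu$ of the random walk whose ratio sequence is fixed, arguing via Hochman's theorem (Lemma~\ref{lem:hochman} holds for the fibred entropy as well).

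Granting the conditioning, we collect the terms. The averaged entropy gap satisfies
\[\E[H(B_n\mid R_n)]-\E[H(\cL(B_n\mid R_n);C^{-n})]\ge H(G_n)-o(n)-H(B_n;C^{-n})=n(h_\mu-|\chi_\mu|\dim\nu)+o(n),\]
using concavity of $H(\cdot;r)$ and Lemma~\ref{lem:hochman}. Set $\delta=h_\mu-|\chi_\mu|\dim\nu>0$. We choose $m$ so large that $(k-1)\log(m+1)/m<\delta/2$. For large $n$ we then obtain $\E[\cE_{C^{-n}}]\ge \frac{\delta n}{4\cdot30m}-\frac12\ge\eta n$ with $\eta=\delta/(240m)$, which depends only on $\mu$.
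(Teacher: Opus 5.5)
There is a genuine gap. The entropy side of your argument and the Proposition~\ref{prop:finite-sum} side use two incompatible conditionings, and the final ``collect the terms'' step silently combines them.

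To apply Proposition~\ref{prop:finite-sum} with $L=n$ and $K=k$ you must condition on the full ratio sequence $\omega$. The only lower bound you then have is $H(B_n\mid\omega)-\E[H(\cL(B_n\mid\omega);C^{-n})]\ge \delta n-H(\omega)-o(n)$. Here $H(\omega)=nH(\rho(X_1))$, so the bound is useless as soon as the ratio distribution has entropy at least $\delta$.

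Conditioning instead on $R_n$ (or on the type) fixes the entropy side, since $H(R_n)\le k\log(n+1)$, and your displayed gap estimate is then correct. But given $R_n$ the steps are exchangeable rather than independent, and the coefficients $\rho(X_1\cdots X_{j-1})$ are random. So $\cL(B_n\mid R_n)$ is not of the form required by Proposition~\ref{prop:finite-sum}; treating it as a single digit gives $K\approx k^n$, which destroys the bound.

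Your two fallbacks do not close this. The ``independent-in-appearance digits'' remark gives no actual construction. A fibred version of Lemma~\ref{lem:hochman} is unproved, and it would not suffice anyway, because the fibred entropy $H(B_n\mid\omega)$ has itself already lost $H(\omega)$.

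The missing idea is to condition at an intermediate resolution. It must cost only a small multiple of $n$ in entropy, yet keep a fixed-coefficient independent-digit structure. The steps are as follows.

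\begin{enumerate}
\item Fix $N$ with $k\log(N+1)/N<\delta/4$. Split the first $n$ steps into $L=\lceil n/N\rceil$ blocks of length $N$, write the block products as $x\mapsto A_jx+D_j$, and condition on $T=(A_1,\ldots,A_L)$.
\item Each $A_j$ depends only on the counts of the maps in its block, so it takes at most $(N+1)^k$ values. Hence $H(B_n\mid T)=H(G_n\mid T)\ge nh_\mu-Lk\log(N+1)\ge n(h_\mu-\delta/4)-O(1)$.
\item Concavity of $H(\cdot\,;C^{-n})$ together with Lemma~\ref{lem:hochman} then gives an averaged gap of at least $\delta n/2$.
\item Given $T$, $B_n=D_1+A_1D_2+\cdots+(A_1\cdots A_{L-1})D_L$, with conditionally independent digits each taking at most $K=k^N$ values. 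So Proposition~\ref{prop:finite-sum} applies.
\item Its error term $L(K-1)\log(m+1)/m$ is at most $\delta n/8$ once $m$ is chosen large in terms of $N$, not just $k$ as in your choice.
\item Finally, $R_n$ is a function of $T$, so concavity of $\cE_{C^{-n}}$ lets you pass from $\cL(B_n\mid T)$ to $\cL(B_n\mid R_n)$. This is exactly the move you made in your first step for $\omega$.
\end{enumerate}
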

\begin{proof}
Put $\delta=h_\mu-|\chi_{\mu}| \dim \nu >0$. Choose a block length $N$ such that
$k\log(N+1)/N<\delta/4$. To reveal only little entropy, split the first $n$ increments into
$L=\lceil n/N\rceil$ consecutive blocks of length $N$, except
possibly the last, and write their products as $x\mapsto A_jx+D_j$.
Set $T=(A_1,\ldots,A_L)$. Each $A_j$ takes at most $(N+1)^k$ values,
and $R_n$ is determined by $T$. Hence
\begin{align*}
 H(B_n\mid T)
 &=H(G_n\mid T)\ge H(G_n)-H(T)\\
 &\ge nh_\mu-Lk\log(N+1)
 \ge n(h_\mu-\delta/4)-O(1),
\end{align*}
where $H(G_n)\ge nh_\mu$ follows from subadditivity of random-walk
entropy. Concavity of entropy at a scale and Lemma~\ref{lem:hochman}
give
\[
 \E[H(\cL(B_n\mid T);C^{-n})]
 \le H(B_n;C^{-n})=n|\chi_{\mu}| \dim \nu +o(n).
\]
Thus the average conditional entropy gap is at least $\delta n/2$
for large $n$.

Given $T$, the $D_j$ remain independent and
\[
 B_n=D_1+A_1D_2+\cdots+(A_1\cdots A_{L-1})D_L
\]
has fixed coefficients. Each digit has at most $K=k^N$ values.
Choose $m$ so large that
$(K-1)\log(m+1)/m<\delta N/8$.
Apply Proposition~\ref{prop:finite-sum} conditionally and average.
For large $n$, this gives
\[
 \E\bigl[\cE_{C^{-n}}(\cL(B_n\mid T))\bigr]
 \ge\frac{\delta n}{120m}-\frac12.
\]
Finally, $R_n$ is a function of $T$, so concavity of $\cE_{C^{-n}}$
allows us to replace $T$ by $R_n$. Decreasing the constant proves
\eqref{eq:fine-variance}, for example with $\eta=\delta/(240m)$.
\end{proof}

\subsection{Convolution disintegrations}\label{sec:Wdef}
We introduce the following functional to record how much variance can be placed
in independent summands at a common scale.

\begin{definition}\label{def:convolution-variance}
For a compactly supported probability measure $\lambda$ and $r>0$,
a scale-$r$ convolution disintegration is a representation
\[
 \lambda=\int_I \lambda_i^{(1)}*\cdots*
 \lambda_i^{(m_i)}\,d\theta(i),
\]
where $(I,\theta)$ is a probability space, $m_i$ is a finite positive
integer, and each factor is supported on an interval of length at
most $r$. All families are required to be measurable. Define
\begin{equation}\label{eq:convolution-variance}
 W(\lambda;r)=\inf\int_I
 \exp\left(-\frac4{r^2}\sum_{j=1}^{m_i}
 \Var(\lambda_i^{(j)})\right)\,d\theta(i),
\end{equation}
with the infimum over these disintegrations.
\end{definition}

We use $W$ as it has many suitable properties, as shown in Lemma \ref{lem:convolution-properties} and in particular it is well behaved under convolution. We now provide some further motivation for this definition. At every sufficiently small scale, we aim to decompose the self-similar measure into components that are approximately normal, together with a small remainder. The standard deviations of these normal distributions should be comparable to the given scale. Standard Berry--Esseen estimates (Lemma~\ref{lemma:approx_normal}) give the required normal approximations. Lemma~\ref{lemma:wasserstein_enough} then provides entropy bounds between two scales for measures close to normal distributions. By concavity of entropy between two scales, these bounds imply that the self-similar measure is full dimensional.

We use $W(\cdot; \cdot)$ to keep track of this approximation. Suppose that $W(\nu; r)$ is sufficiently small. A simple application of Markov's inequality then gives the desired decomposition after discarding an arbitrarily small fraction of the mass of $\nu$. Each remaining component is the law of a sum of many independent random variables. These laws approximate normal distributions with variance $Cr^2$ arbitrarily well. More precisely, we prove the following proposition.

\begin{proposition}\label{prop:w_to_zero}
Suppose that $\nu$ is a self-similar measure on $\R$ and that there is
a sequence $r_n\to0$ such that $W(\nu;r_n)\to0$. Then $\dim\nu=1$.
\end{proposition}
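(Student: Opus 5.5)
Fix $\varepsilon>0$ and a large constant $M>1$. The goal is to show that for $n$ large, the average entropy between scales satisfies $\frac{1}{\log M} H(\nu; r_n \mid M r_n) \geq 1 - O(\varepsilon)$, after choosing the scale parameters suitably. From this, exact dimensionality of $\nu$ and the standard relation between $\dim\nu$ and entropy at scales give $\dim\nu = 1$.

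\textbf{Step 1: obtain a good disintegration.} Since $W(\nu;r_n)\to 0$, for large $n$ there is a scale-$r_n$ convolution disintegration with
\[
\int_I \exp\Bigl(-\tfrac{4}{r_n^2}\textstyle\sum_j \Var(\lambda_i^{(j)})\Bigr)\,d\theta(i)<\varepsilon e^{-A},
\]
where $A$ is a large constant. By Markov's inequality, outside a set of $\theta$-measure at most $\varepsilon$ the components satisfy $\sigma_i^2:=\sum_j\Var(\lambda_i^{(j)})\ge A r_n^2/4$.

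\textbf{Step 2: normal approximation of the good components.} For such $i$, let $\lambda_i$ be the convolution. Each factor, once centered, is supported in an interval of length $r_n$, so its centered third moments are at most $r_n\Var$. Lemma~\ref{lemma:approx_normal} then shows that the centered and normalized $\lambda_i$ is within Wasserstein distance $O(r_n/\sigma_i)=O(A^{-1/2})$ of a standard Gaussian. Equivalently, $\lambda_i$ is within $O(r_n)$ of the Gaussian $N(c_i,\sigma_i^2)$, with $\sigma_i\ge \sqrt{A}r_n/2$.

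There is a complication: $\sigma_i$ may be far larger than $r_n$, which is exactly the issue of where to measure entropy. Since $\nu$ is compactly supported, $\sigma_i\le\operatorname{diam}\supp\nu$. Working between the two scales $r_n$ and $Mr_n$, I need each good component to have entropy between these scales close to $\log M$. This holds for a Gaussian of standard deviation $\sigma\ge \sqrt{A}r_n$ as long as $A\gg M^2$. It holds at every larger $\sigma$ too, since a wide Gaussian is nearly uniform at scale $Mr_n$. Lemma~\ref{lemma:wasserstein_enough} transfers this from the Gaussian to $\lambda_i$: a Wasserstein perturbation of size $\ll r_n$ changes $H(\cdot;r_n\mid Mr_n)$ by $o(\log M)$. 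This relative-error requirement is what forces $A$ large. I expect the main obstacle to be using the precise form of Lemma~\ref{lemma:wasserstein_enough}. I would pick $A$ so that the Wasserstein error $r_n/\sqrt A$ relative to the scale $r_n$ is below the lemma's threshold for $\varepsilon$. If needed, I would pass to the scale pair $(Kr_n, KMr_n)$ with $1\ll K\ll\sqrt A$ to gain room.

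\textbf{Step 3: concavity and conclusion.} By Lemma~\ref{lemma:entropy_concave} (taking $M$ an integer), $H(\nu;\rho\mid M\rho)\ge\int_I H(\lambda_i;\rho\mid M\rho)\,d\theta(i)\ge(1-\varepsilon)(\log M-o(\log M))$ at $\rho=r_n$ (or $Kr_n$). Concavity is valid here because $\nu$ is the $\theta$-mixture of the $\lambda_i$. To pass from one scale window to the dimension I use self-similarity: for stopping times $\tau$ with $R_\tau\approx \rho/r_n$, stationarity \eqref{eq:stationarity} writes $\nu$ as a mixture of affine images $B_\tau+R_\tau\nu$. Each image, at scales $\rho$ comparable to $R_\tau r_n$, inherits the bound above; the affine invariance of entropy changes this only by $O(1)$ after adjusting $M$. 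Hence $H(\nu;\rho\mid M\rho)\ge(1-2\varepsilon)\log M$ for all small $\rho$ at a cost bounded by $\rho_{\min}$. Summing over a chain of scales $\rho, M\rho,\dots$ down to $0$ gives $H(\nu;\rho)\ge(1-3\varepsilon)\log\rho^{-1}$ for small $\rho$. By exact dimensionality this gives $\dim\nu\ge1-3\varepsilon$, and letting $\varepsilon\to0$ yields $\dim\nu=1$.
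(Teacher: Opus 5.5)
The gap is in Step 2, where you transfer the Gaussian entropy bound to $\lambda_i$. It has two parts.

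First, the Wasserstein error is not $r_n/\sqrt A$. As you note yourself, Lemma~\ref{lemma:approx_normal} gives $\mathcal W_1(\lambda_i,\mathcal N(c_i,\sigma_i^2))\le Cr_n$ in absolute terms, and this does not improve as $A$ grows. Relative to the scale $r_n$ the error is of order one, so Lemma~\ref{lemma:wasserstein_enough} cannot be used at scale $r_n$. Passing to a window $(\rho,M\rho)$ with $\rho=Kr_n$, $K\gg1$, is therefore necessary, not optional. It is exactly the paper's choice of a disintegration at scale $\delta r$ while measuring entropy at scale $r$.

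Second, even at scale $\rho=Kr_n$, Lemma~\ref{lemma:wasserstein_enough} requires $\sigma\in[a\rho,b\rho]$ and $\E[Y^2]\le b^2\rho^2$, with $\delta$ depending on $b$. Your good components only satisfy a lower bound $\sigma_i\gtrsim\sqrt A\,r_n$. The upper bound $\sigma_i\le\operatorname{diam}\supp\nu$ is of order $r_n^{-1}$ relative to $\rho$. Using it as $b$ makes $\delta$ depend on $n$, whereas $n$ must be chosen after $A$ and $K$, so the argument is circular. Your remark that a wide Gaussian has $H(\cdot;\rho\mid M\rho)\approx\log M$ is true, but it concerns the Gaussian, not $\lambda_i$. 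Nothing you cite controls the entropy of $\lambda_i$ uniformly in $\sigma_i/\rho$.

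The missing idea is the truncation in Lemma~\ref{lemma:variance_sum_to_entropy}. Take the shortest initial sub-convolution of the factors of $\lambda_i$ whose variance is at least $A_0\rho^2$, with $A_0\gg M^2$ fixed; it exists on the good set once $A\ge4A_0K^2$. Every factor has variance at most $r_n^2/4\le\rho^2$, so this sub-convolution has variance in $[A_0\rho^2,(A_0+1)\rho^2]$. Lemmas~\ref{lemma:approx_normal} and~\ref{lemma:wasserstein_enough} then apply at both scales $\rho$ and $M\rho$ with fixed $a,b$ and error $Cr_n=(C/K)\rho$. Convolving with the remaining factors produces a mixture of translates, so by Lemma~\ref{lemma:entropy_concave} and translation invariance this cannot decrease $H(\cdot;\rho\mid M\rho)$. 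Alternatively, you could prove directly, via a coupling and a uniformly random grid shift, that $\E|Y-G|\le\delta\rho$ implies $|H(Y;\rho)-H(G;\rho)|=O(\delta\log\delta^{-1})$ uniformly in the variance. That would be a new lemma, not Lemma~\ref{lemma:wasserstein_enough}.

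With this repair the rest of your argument goes through, and your Step 3 is a genuinely different but valid route. The paper first upgrades $W(\nu;r_n)\to0$ to $W(\nu;s)\to0$ for all $s\to0$ via \eqref{eq:propagate}, using convexity and \eqref{eq:scale-W}, and then runs the entropy argument with $C=2$ at every scale. You instead run it in a single window and transport the entropy bound by stationarity and concavity. Affine invariance of $H(\cdot;\rho\mid M\rho)$ is in fact exact. The real loss is an additive $O(\log\rho_{\min}^{-1})$ from the mismatch of scales $\rho/|R_\tau|\in[\rho_0,\rho_0/\rho_{\min})$. This is absorbed by choosing $\log M\gg\varepsilon^{-1}\log\rho_{\min}^{-1}$ before $A$, $K$ and $n$.
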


In \cite{KK25} we do not use $W(\cdot;r)$ but instead use effective Cramér-type bounds to achieve the same goal of showing that all but an arbitrarily small portion of the measure can be approximated by the law of the sum of many independent random variables. This has the advantage of being more efficient but leads to a more complicated argument. However, both approaches are closely related.

The remaining aim of this section is to prove Proposition~\ref{prop:w_to_zero}, which we establish by showing that $H(\nu; r) = \log r^{-1} + o(\log r^{-1})$ as $r\to 0$, which implies $\dim\nu = 1$ by exact dimensionality. We first prove some basic properties of $W(\cdot;r)$. We will then state and prove the lemmas necessary to prove Proposition \ref{prop:w_to_zero} and prove Proposition \ref{prop:w_to_zero}.

\begin{lemma}\label{lem:convolution-properties}
The functional $W$ takes values in $[0,1]$ and is convex in its
measure argument. Moreover,
\begin{align}
 W(aY+b;r)&=W(Y;r/|a|) &&(a\ne0),\label{eq:affine-W}\\
 W(\lambda*\theta;r)&\le W(\lambda;r)W(\theta;r),\label{eq:product-W}\\
 W(\lambda;s)&\le W(\lambda;r)^{r^2/s^2} &&(s\ge r),\label{eq:scale-W}\\
 W(\lambda;r)&\le1-(1-e^{-1})V_\lambda(r).\label{eq:local-to-convolution}
\end{align}
\end{lemma}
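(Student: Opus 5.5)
Each property comes straight from manipulating disintegrations in Definition~\ref{def:convolution-variance}; I will take the items in order.

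\emph{Range and convexity.} The integrand $\exp(-\frac4{r^2}\sum_j\Var(\lambda_i^{(j)}))$ lies in $(0,1]$, so $W\in[0,1]$; the infimum is over a nonempty family once we cut $\lambda$ into pieces of length at most $r$ (a disintegration with $m_i=1$ into the restrictions $\lambda|_{[kr,(k+1)r)}$ normalized, or use a randomly shifted grid). For convexity, given $\lambda=t\lambda'+(1-t)\lambda''$ and near-optimal disintegrations of $\lambda'$ and $\lambda''$ over $(I',\theta')$ and $(I'',\theta'')$, take the disjoint union index space with measure $t\theta'+(1-t)\theta''$. This is a scale-$r$ disintegration of $\lambda$ whose cost is the corresponding convex combination.

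\emph{Affine, product and scale properties.} For \eqref{eq:affine-W}, push every factor forward by $x\mapsto ax$ and put the translation $b$ into one factor. Lengths scale by $|a|$ and variances by $a^2$, so disintegrations at scale $r/|a|$ for $Y$ correspond bijectively to disintegrations at scale $r$ for $aY+b$, with equal cost.

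For \eqref{eq:product-W}, take the product space $(I\times I',\theta\otimes\theta')$ and concatenate the factor lists. Convolution is bilinear, so this disintegrates $\lambda*\theta$. Variances add in the exponent, so the cost is the product of the two integrals; taking infima gives the bound.

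For \eqref{eq:scale-W}, with $s\ge r$, any scale-$r$ disintegration is also a scale-$s$ disintegration. Its cost at scale $s$ is $\int e^{-(r^2/s^2)x_i}\,d\theta$ with $x_i=\frac4{r^2}\sum_j\Var$. Since $r^2/s^2\le1$, Jensen (concavity of $y\mapsto y^{r^2/s^2}$) gives $\int (e^{-x_i})^{r^2/s^2}d\theta\le(\int e^{-x_i}d\theta)^{r^2/s^2}$. Taking the infimum over disintegrations proves the claim.

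\emph{Local variance bound \eqref{eq:local-to-convolution}.} This is the one step needing a construction. Use the randomly shifted grid: for $t$ uniform on $[0,r)$, write $\lambda=\frac1r\int_0^r\sum_k\lambda(I(kr-t,r))\,\lambda|_{I(kr-t,r)}\,dt$. Each component is a single factor ($m_i=1$) supported on an interval of length $r$, with variance $v$ satisfying $x=4v/r^2\in[0,1]$. Convexity of $e^{-x}$ on $[0,1]$ gives $e^{-x}\le1-(1-e^{-1})x$. Averaging, the mean of $x$ is
\[
\frac1r\int_0^r\sum_k\frac4{r^2}F_\lambda(I(kr-t,r))\,dt=\frac4{r^3}\int_\R F_\lambda(I(a,r))\,da=V_\lambda(r).
\]
This yields the bound. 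The only delicate points are measurability of these families and the identity converting the shifted-grid average into the integral over $a$; both are routine.
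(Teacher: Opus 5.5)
Your proposal is correct and follows the paper's proof essentially step by step. Like the paper, you combine, concatenate or push forward disintegrations to get convexity, \eqref{eq:affine-W} and \eqref{eq:product-W}; you use Jensen with the exponent $r^2/s^2$ for \eqref{eq:scale-W}; and you use a uniformly shifted grid with the chord bound $e^{-u}\le 1-(1-e^{-1})u$ for \eqref{eq:local-to-convolution}. The only difference is that you spell out details the paper leaves implicit, such as the $\lambda(I)$-weighted averaging identity that recovers $V_\lambda(r)$.
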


This lemma is important as we will use \eqref{eq:local-to-convolution} to estimate $W$ of the translation part of the product of $n$ similarities after conditioning on the linear part in terms of the variance energy. We then use the other properties to put together estimates for many such products into an improved estimate.

\begin{proof}[Proof of Lemma \ref{lem:convolution-properties}]
Clearly $W\le1$. Convexity follows by combining disintegrations, affine changes of variables give \eqref{eq:affine-W}, and
concatenating two independent disintegrations gives
\eqref{eq:product-W}. A scale-$r$ convolution disintegration is also a
scale $s$ convolution disintegration for $s\ge r$. From the definition it is immediate that if its integrand at scale $r$ is $Z\in[0,1]$,
its integrand at scale $s$ is $Z^{r^2/s^2}$. Jensen's inequality
followed by the infimum gives \eqref{eq:scale-W}.

For \eqref{eq:local-to-convolution}, condition $\lambda$ on a shifted
grid of mesh $r$, taking one factor in each component, and average
over a uniform shift. If $Q$ denotes the resulting cell label, every
normalized component variance
$u=4\Var(\lambda\mid Q)/r^2$ lies in $[0,1]$. The inequality
$e^{-u}\le1-(1-e^{-1})u$ and the definition of $V_\lambda(r)$
give the result.
\end{proof}

We need the following Lemma for the proof of Proposition~\ref{prop:w_to_zero}.

\begin{lemma}\label{lemma:approx_normal}
There is an absolute constant $C>0$ such that the following holds.
Suppose that $Y_1,\ldots,Y_n$ are independent real-valued random
variables with finite third moments and mean zero. Let
\[
 \omega_i^2=\E[Y_i^2],\qquad \gamma_i^3=\E[|Y_i|^3],
 \qquad r^2=\sum_{i=1}^n\omega_i^2>0.
\]
Then
\[
 \mathcal W_1\left(\sum_{i=1}^nY_i,\mathcal N(0,r^2)\right)
 \le C\frac{\sum_{i=1}^n\gamma_i^3}{r^2}.
\]
In particular, if $|Y_i|\le\varepsilon$ almost surely for every $i$,
then
\[
 \mathcal W_1\left(\sum_{i=1}^nY_i,\mathcal N(0,r^2)\right)
 \le C\varepsilon.
\]
\end{lemma}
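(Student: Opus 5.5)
The plan is to reduce to the normalized case and invoke the standard Wasserstein Berry--Esseen bound from Stein's method. Set $W=r^{-1}\sum_{i=1}^n Y_i$ and $\xi_i=Y_i/r$. Then the $\xi_i$ are independent and centered, with $\sum_i\E[\xi_i^2]=1$. Since $\mathcal W_1(aU,aV)=|a|\,\mathcal W_1(U,V)$, it suffices to prove
\[
 \mathcal W_1\bigl(W,\mathcal N(0,1)\bigr)\le C\sum_{i=1}^n\E|\xi_i|^3 .
\]
Multiplying by $r$ then gives $C r\cdot\sum_i\gamma_i^3/r^3=C\sum_i\gamma_i^3/r^2$, which is the claimed bound.

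For the normalized bound I will use the Kantorovich--Rubinstein duality, $\mathcal W_1(W,Z)=\sup_{h}|\E h(W)-\E h(Z)|$, where the supremum is over $1$-Lipschitz $h$. For each such $h$, let $f=f_h$ solve the Stein equation $f'(w)-wf(w)=h(w)-\E h(Z)$. The standard estimates give $\|f'\|_\infty\le\sqrt{2/\pi}$ and $\|f''\|_\infty\le 2$.

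Next write $W^{(i)}=W-\xi_i$, which is independent of $\xi_i$. Expand
\[
\E[Wf(W)]=\sum_i\E\bigl[\xi_i\bigl(f(W)-f(W^{(i)})\bigr)\bigr]
\]
by Taylor's theorem, and compare the result with $\E f'(W)=\sum_i\E[\xi_i^2]\,\E f'(W)$. There are two error terms. The first is bounded by $\tfrac12\|f''\|_\infty\sum_i\E|\xi_i|^3$. The second comes from replacing $\E f'(W^{(i)})$ by $\E f'(W)$ and is bounded by $\|f''\|_\infty\sum_i\E[\xi_i^2]\,\E|\xi_i|$. Since $\E[\xi_i^2]\,\E|\xi_i|\le\E|\xi_i|^3$ by Lyapunov's (or Jensen's) inequality, the total is at most an absolute constant times $\sum_i\E|\xi_i|^3$. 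Alternatively, I could cite this well-known $L^1$ Berry--Esseen/Stein bound directly, e.g.\ from Chen--Goldstein--Shao.

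The ``in particular'' statement follows immediately. If $|Y_i|\le\varepsilon$, then $\gamma_i^3=\E|Y_i|^3\le\varepsilon\,\E[Y_i^2]=\varepsilon\omega_i^2$, so $\sum_i\gamma_i^3\le\varepsilon r^2$, and the main bound gives $\mathcal W_1\le C\varepsilon$.

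The only step with real content is the Stein-method estimate itself. In particular, one needs the bound on $\|f_h''\|_\infty$ for Lipschitz $h$, and this is classical. Given that, I will treat the lemma as a citation of the known result plus the scaling argument above.
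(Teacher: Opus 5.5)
Your proposal is correct and follows the paper's proof. The paper also rescales by $r$, invokes a known $L^1$ (Wasserstein) Berry--Esseen bound (it cites Goldstein's theorem, which gives $C=1$), and obtains the second assertion from $\gamma_i^3\le\varepsilon\omega_i^2$.

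Your self-contained Stein-method sketch is also sound. It gives $C=3$ from $\|f_h''\|_\infty\le 2$ and $\E[\xi_i^2]\,\E|\xi_i|\le\E|\xi_i|^3$, and any absolute constant suffices for the lemma.
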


\begin{proof}
    Many estimates of this form are available in the literature and we
    cite \cite{Gol10}*{Theorem~1.1} as one convenient version. In one
    dimension, the Wasserstein distance is the $L^1$ distance from \cite{Gol10}*{Theorem~1.1}. So \cite{Gol10}*{Theorem~1.1}, after
    rescaling by $r$, gives the first assertion with $C=1$. For the
    second assertion simply note that
    $\gamma_i^3\leq\varepsilon\omega_i^2$ and sum over $i$.
\end{proof}

The following lemma allows us to estimate entropy at a given scale from a normal approximation in Wasserstein distance. We will apply it at two scales to estimate the entropy between them.

\begin{lemma}\label{lemma:wasserstein_enough}
For every $0<a<b<\infty$ and $\varepsilon>0$, there is $\delta>0$
such that the following holds. If $r>0$, $\sigma\in[ar,br]$, and $Y$
is a mean-zero random variable satisfying
\[
 \E[Y^2]\le b^2r^2,
 \qquad
 \mathcal W_1(Y,\mathcal N(0,\sigma^2))<\delta r,
\]
then
\[
 \left|H(Y;r)-H(\mathcal N(0,\sigma^2);r)\right|<\varepsilon.
\]
\end{lemma}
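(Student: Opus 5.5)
By the scaling identity $H(Y;r)=H(Y/r;1)$ and the fact that $\mathcal W_1$ scales linearly, we may assume $r=1$. Then $\sigma\in[a,b]$, $\E[Y^2]\le b^2$ and $\mathcal W_1(Y,\mathcal N(0,\sigma^2))<\delta$. The plan is a compactness and continuity argument. We show that $H(\cdot;1)$ is continuous, uniformly, on the class of laws with second moment at most $b^2$, with respect to $\mathcal W_1$ near Gaussians with $\sigma\in[a,b]$.

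Fix the shift $t\in[0,1)$ and consider $H(\lfloor Y+t\rfloor)=\sum_k \phi(p_k)$, where $\phi(x)=-x\log x$ and $p_k=\Prob[Y+t\in[k,k+1)]$. Let $g_k$ be the corresponding Gaussian cell masses.

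First, the tails. Chebyshev gives $\sum_{|k|>M}p_k\le \Prob[|Y|\ge M-1]\le b^2/(M-1)^2$. We also need control of the entropy tail $\sum_{|k|>M}\phi(p_k)$. For this we use the standard bound that entropy is controlled by a second moment: with $q=\sum_{|k|>M}p_k$, the conditional distribution on the tail cells has second moment at most $b^2/q$. Maximizing entropy under this moment constraint gives a tail contribution of at most $q\log(1/q)+q\,O(\log(2+b^2/q))$, which tends to $0$ as $q\to 0$, uniformly in $Y$ and $t$. The same estimate holds for the Gaussian $\mathcal N(0,\sigma^2)$ uniformly in $\sigma\le b$. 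So we choose $M$ with both tail contributions below $\varepsilon/4$.

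Second, the finitely many central cells $|k|\le M$. Here we compare the cell masses $p_k$ with $g_k$. Wasserstein closeness does not control the mass of a single interval directly, so we smooth. For $\eta>0$, take a Lipschitz function $\psi$ with $\ind_{[k+\eta,k+1-\eta]}\le\psi\le\ind_{[k,k+1]}$ and Lipschitz constant $1/\eta$. Then $|\E\psi(Y+t)-\E\psi(Z+t)|\le\delta/\eta$, where $Z\sim\mathcal N(0,\sigma^2)$. The Gaussian mass of an $\eta$-neighbourhood of the cell endpoints is at most $2\eta/(a\sqrt{2\pi})$, and this is the point where the lower bound $\sigma\ge a$ enters. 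Combining the two bounds from above and below gives $|p_k-g_k|\le \delta/\eta+O(\eta/a)$, uniformly in $t$ and $\sigma\in[a,b]$. Choosing $\eta=\sqrt\delta$ makes this small. Since $\phi$ is uniformly continuous on $[0,1]$, the central sum differs by at most $(2M+1)\,\omega_\phi(O(\sqrt\delta/a))<\varepsilon/2$ once $\delta$ is small. Averaging over $t$ then gives the claim.

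The main obstacle is the uniform tail entropy bound, since a Wasserstein estimate alone does not control entropy. That is why the hypothesis $\E[Y^2]\le b^2r^2$ is included, and the only real work is the maximal-entropy estimate for a distribution on the integers with bounded second moment. The zero-mean assumption is not needed beyond this moment bound.
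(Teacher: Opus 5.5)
Your proof is correct, but it takes a different route from the paper's. The paper argues by contradiction and compactness. After rescaling to $r=1$, it takes $Y_n$ and $\sigma_n\in[a,b]$ with $\mathcal W_1(Y_n,\mathcal N(0,\sigma_n^2))\to0$ and passes to a subsequence with $\sigma_n\to\sigma$. Convergence in distribution to $\mathcal N(0,\sigma^2)$ then gives convergence of every cell mass for almost every shift $t$, and dominated convergence finishes the argument. The dominating function comes from the pointwise Chebyshev bound $\Prob[Y_n+t\in[k,k+1)]\ll_b(1+|k|)^{-2}$.

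You instead prove a direct quantitative estimate: truncation to the cells $|k|\le M$, a Lipschitz sandwich with Kantorovich--Rubinstein duality, and the Gaussian density bound $1/(a\sqrt{2\pi})$. Together these control each central cell mass to within $O(\sqrt\delta(1+1/a))$, and uniform continuity of $\phi$ does the rest.

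Your version gives an explicit $\delta=\delta(a,b,\varepsilon)$ and compares directly with $\mathcal N(0,\sigma^2)$ for the given $\sigma$. The paper's version is shorter but non-effective. It also tacitly needs $H(\mathcal N(0,\sigma_n^2);1)\to H(\mathcal N(0,\sigma^2);1)$, which follows from the same dominated convergence.

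The step you call the main obstacle is easier than your maximal-entropy argument. Chebyshev already gives $p_k\le C_b(1+|k|)^{-2}$ for each cell, and $\phi(x)=-x\log x$ is increasing on $[0,1/e]$. Hence $\sum_{|k|>M}\phi(p_k)\le\sum_{|k|>M}\phi\bigl(C_b(1+|k|)^{-2}\bigr)$ for $M$ large, which is the tail of a convergent series. This is exactly the domination the paper uses, and it works equally for $\mathcal N(0,\sigma^2)$ with $\sigma\le b$.
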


\begin{proof}
By scaling, it suffices to take $r=1$. If the assertion failed, there
would be mean-zero random variables $Y_n$ and numbers
$\sigma_n\in[a,b]$ such that
\[
 \E[Y_n^2]\le b^2,\qquad
 \mathcal W_1(Y_n,\mathcal N(0,\sigma_n^2))\longrightarrow0,
\]
but the asserted entropy convergence fails. Passing to a subsequence,
we may assume that $\sigma_n\to\sigma\in[a,b]$. It follows that
$Y_n$ converges in distribution to $\mathcal N(0,\sigma^2)$.

For $k\in\Z$ and $t\in[0,1]$, write
\[
 A_{k,t}^{(n)}=\Prob[Y_n\in[k-t,k+1-t)],
 \quad \text{and} \quad
 B_{k,t}=\Prob[\mathcal N(0,\sigma^2)\in[k-t,k+1-t)].
\]
Then $A_{k,t}^{(n)}\to B_{k,t}$ for every $k$ and almost every $t$.
Moreover, Chebyshev's inequality gives, uniformly in $n$ and $t$, $A_{k,t}^{(n)}\ll_b(1+|k|)^{-2}.$ Consequently the terms $-A_{k,t}^{(n)}\log A_{k,t}^{(n)}$ admit a
summable dominating function. Dominated convergence gives $H(Y_n;1)\rightarrow H(\mathcal N(0,\sigma^2);1)$ as $n\to \infty$, which is
a contradiction.
\end{proof}

We need the following to estimate the entropy between two scales.

\begin{lemma}\label{lemma:variance_sum_to_entropy}
For every integer $C>1$ and $\varepsilon>0$, there are $\delta>0$
and $A>0$ such that the following holds. If $r>0$ and
$Y_1,\ldots,Y_n$ are independent mean-zero real-valued random
variables satisfying
\[
 |Y_i|\le\delta r\quad\text{almost surely},
 \qquad
 \sum_{i=1}^n\Var(Y_i)\ge Ar^2,
\]
then
\[
 H\left(\sum_{i=1}^nY_i;r\mid Cr\right)>\log C-\varepsilon.
\]
\end{lemma}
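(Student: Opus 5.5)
We prove Lemma~\ref{lemma:variance_sum_to_entropy} by approximating $S=\sum_iY_i$ by a Gaussian at two consecutive scales. Since $H(S;r\mid Cr)$ is concave in the law of $S$ (Lemma~\ref{lemma:entropy_concave}), we may split $S$ into a Gaussian part with variance of order $r^2$ plus an independent remainder. The Gaussian part has entropy between scales $r$ and $Cr$ close to $\log C$ provided its standard deviation is large compared with $Cr$.

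The key Gaussian computation is this. For $\mathcal N(0,\sigma^2)$ with $\sigma/r\to\infty$ we have
\[
H(\mathcal N(0,\sigma^2);r)=\log(\sigma/r)+\tfrac12\log(2\pi e)+o(1),
\]
by the standard Riemann-sum approximation of differential entropy. It is uniform over the shift average in \eqref{eq:scale-entropy}. Hence
\[
H(\mathcal N(0,\sigma^2);r\mid Cr)\to\log C \quad \text{as } \sigma/r\to\infty.
\]
So fix $M$ large (depending on $C,\varepsilon$) such that for every $\sigma\in[Mr,2Mr]$ the value $H(\mathcal N(0,\sigma^2);r\mid Cr)$ exceeds $\log C-\varepsilon/3$.

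Next we reduce to variance in $[M^2r^2,4M^2r^2]$. Group the $Y_i$ greedily, in index order, into consecutive blocks $G_1,\dots,G_p$. Close a block as soon as its variance sum reaches $M^2r^2$. Because $\Var(Y_i)\le\delta^2r^2\le r^2$, each closed block has variance in $[M^2r^2,(M^2+1)r^2]$. Take $A=M^2$ to guarantee at least one closed block, and absorb any leftover into the last block. Its variance then stays below $(2M^2+1)r^2\le 4M^2r^2$.

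Then write $S=U+V$, where $U=\sum_{i\in G_1}Y_i$ and $V$ is the independent rest. Conditioning on $V$ gives the law of $S$ as a mixture of translates of the law of $U$. Entropy between two scales is translation invariant, since the shift average in \eqref{eq:scale-entropy} absorbs translations. Concavity (Lemma~\ref{lemma:entropy_concave}, valid as $C$ is an integer) therefore gives
\[
H(S;r\mid Cr)\ge H(U;r\mid Cr).
\]

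Now apply Lemma~\ref{lemma:approx_normal} to $U$ with $\sigma^2=\Var U\in[M^2r^2,4M^2r^2]$. It gives
\[
\mathcal W_1(U,\mathcal N(0,\sigma^2))\le C_0\delta r.
\]
Apply Lemma~\ref{lemma:wasserstein_enough} twice. At scale $r$, use $a=M$, $b=2M$ and tolerance $\varepsilon/3$. At scale $Cr$, use $a=M/C$, $b=2M/C$, noting that $\mathcal W_1<\delta' Cr$ there too. Both apply once $\delta$ is small enough that $C_0\delta<\min(\delta_1,C\delta_2)$. Combining the two estimates with the Gaussian computation yields $H(U;r\mid Cr)>\log C-\varepsilon$, as required.

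The main obstacle is the Gaussian computation: showing $H(\mathcal N(0,\sigma^2);r\mid Cr)\to\log C$ uniformly in the shift parameter. It is elementary but needs a careful Riemann-sum and tail argument. The block reduction is needed because Lemma~\ref{lemma:wasserstein_enough} requires the variance to be bounded above in units of $r$.
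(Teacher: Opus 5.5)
Your proposal is correct and follows the paper's proof essentially step for step. The paper also takes the shortest initial subsum whose variance reaches the threshold; this is your first block $G_1$, and the further blocks you construct are never used. It then applies Lemma~\ref{lemma:approx_normal} and Lemma~\ref{lemma:wasserstein_enough} at the scales $r$ and $Cr$, and adds the remaining summands using concavity and translation invariance of entropy between two scales. The only difference in emphasis is that the Gaussian computation you flag as the main obstacle is dispatched in the paper in one line, by scale invariance and smoothness of the Gaussian density.
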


\begin{proof}
By scale invariance and the smoothness of the Gaussian density, we may
choose $A$ sufficiently large such that whenever $Ar^2\le\sigma^2\le(A+1)r^2$ it holds that
\[
 H(\mathcal N(0,\sigma^2);r\mid Cr)>
 \log C-\frac{\varepsilon}{2}.
\]
Choose $\delta<1$ sufficiently
small for Lemmas~\ref{lemma:approx_normal} and
\ref{lemma:wasserstein_enough}, applied at both scales $r$ and $Cr$,
to give a total entropy error of less than $\varepsilon/2$.

Take the shortest initial subsum $S=Y_1+\cdots+Y_\ell$ whose variance
is at least $Ar^2$. Writing $\sigma^2=\Var(S)$, minimality and
$\Var(Y_i)\le\delta^2r^2$ give
\[
 Ar^2\le\sigma^2\le(A+\delta^2)r^2\le(A+1)r^2.
\]
Lemma~\ref{lemma:approx_normal} gives
\[
 \mathcal W_1(S,\mathcal N(0,\sigma^2))\ll\delta r.
\]
It follows from Lemma~\ref{lemma:wasserstein_enough}, used at the two
scales, that
\[
 H(S;r\mid Cr)>\log C-\varepsilon.
\]
Finally, adding the remaining independent summands cannot decrease
entropy between these scales by, for example, Lemma~\ref{lemma:entropy_concave} and the translation invariance of entropy between two scales.
\end{proof}

We now prove Proposition \ref{prop:w_to_zero}.

\begin{proof}[Proof of Proposition \ref{prop:w_to_zero}]
    We first show $W(\nu;s)\to0$ as $s\to0$. For $0<s\le t$,
let $\sigma=\min\{n\ge0:|R_n|\le s/t\}$. This stopping time is
bounded, and for $\rho_{\min} = \min_{1\leq i \leq k} |r_i|$ we have
\[
 \rho_{\min} s/t<|R_\sigma|\le s/t.
\]
By \eqref{eq:stationarity}, convexity, affine invariance and
\eqref{eq:scale-W},
\begin{equation}\label{eq:propagate}
 W(\nu;s)\le\E[W(\nu;s/|R_\sigma|)]
 \le W(\nu;t)^{\rho_{\min}^2}.
\end{equation}
Passing to a decreasing subsequence of $(r_n)$ and applying
\eqref{eq:propagate} with $t=r_n$ shows that $W(\nu;s)\to0$ as
$s\to0$.

Next we show that $H(\nu;r\mid2r)\to\log2$ as $r\to0$. Let
$\varepsilon>0$, and choose $\delta$ and $A$ from
Lemma~\ref{lemma:variance_sum_to_entropy}, with $C=2$. Since
$W(\nu;\delta r)\to0$, for all sufficiently small $r$ there is a
scale-$\delta r$ convolution disintegration
\[
 \nu=\int_I\lambda_i^{(1)}*\cdots*\lambda_i^{(m_i)}\,d\theta(i)
\]
such that, writing
$S_i=\sum_{j=1}^{m_i}\Var(\lambda_i^{(j)})$,
\[
 \int_I\exp\left(-\frac{4S_i}{\delta^2r^2}\right)d\theta(i)
 <\varepsilon\exp\left(-\frac{4A}{\delta^2}\right).
\]
It follows that $\theta(\{i:S_i<Ar^2\})<\varepsilon$. For every other $i$, Lemma~\ref{lemma:variance_sum_to_entropy} gives
\[
 H\left(\lambda_i^{(1)}*\cdots*\lambda_i^{(m_i)};
 r\mid2r\right)\ge\log2-\varepsilon.
\]
The concavity of entropy between two scales therefore gives
\[
 H(\nu;r\mid2r)\ge\log2-O(\varepsilon).
\]
Consequently, as $r \to 0$,
\[
 H(\nu;r)=\log r^{-1}+o(\log r^{-1}),
\]
and hence $\dim\nu=1$.
\end{proof}

\subsection{Variance Summation}\label{sec:VarianceSumm}
Throughout this section write $\rho_{\min} = \min_{1\leq i \leq k} |r_i|$. The following lemma allows us to combine variance found in different
stopped random walks and is a simple one-dimensional analogue of \cite{KKabs}*{Proposition 6.4}. As outlined in the sketch of the argument, we will apply Lemma~\ref{lem:concatenation} twice, namely in the proof of Proposition~\ref{prop:stopped-block} and of Theorem~\ref{thm:main}.

\begin{lemma}[Concatenation]\label{lem:concatenation}
Let $0<s_1<\cdots<s_m$ and let $\tau_1,\ldots,\tau_m$ be bounded
stopping times for independent walks with independent increments of law
$\mu$.
Write $(C_j,D_j)$ for their signed contraction ratio and translation.
Suppose that, for $j<m$,
\begin{equation}\label{eq:separation}
 s_j\le\rho_{\min} s_{j+1}|C_j|\quad\text{almost surely}.
\end{equation}
For $0<r\le s_1$, these blocks can be concatenated to one walk ending
at a bounded stopping time $T$, such that
\begin{equation}
 \E[W(\cL(B_T\mid R_T);r)]
 \le\prod_{j=1}^m
 \E[W(\cL(D_j\mid C_j);s_j)]^{\rho_{\min}^2},
 \label{eq:concatenation-prefix}
 \end{equation}
 and consequently
 \begin{equation}
     W(\nu;r)
 \le\prod_{j=1}^m
 \E[W(\cL(D_j\mid C_j);s_j)]^{\rho_{\min}^2}.
 \label{eq:concatenation-nu}
 \end{equation} Moreover, $|R_T|>\rho_{\min} r|C_m|/s_m$ almost surely.
\end{lemma}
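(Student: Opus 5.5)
The plan is to build the concatenated walk by interleaving the given blocks with \emph{filler} stopping times. Each filler pushes the running contraction ratio down to just below $r/s_j$ before block $j$ is inserted. Then block $j$ enters $B_T$ multiplied by a prefactor $P_j$ with $\rho_{\min} r/s_j<|P_j|\le r/s_j$. Such a prefactor converts $W$ at scale $s_j$ into $W$ at scale $r$ with only the loss of the exponent $\rho_{\min}^2$.

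\emph{Construction.} Set $T_0=0$. Run the walk until the first time $\sigma_1$ at which the accumulated contraction has absolute value $\le r/s_1$; this is immediate if $r=s_1$. Since each step contracts by at least $\rho_{\min}$, the accumulated ratio $P_1$ satisfies $\rho_{\min}r/s_1<|P_1|\le r/s_1$ (or $P_1=1$ when $r= s_1$, which is also fine). Next run block $1$, i.e. the stopping time $\tau_1$ on fresh increments, so that the accumulated ratio becomes $P_1C_1$. By \eqref{eq:separation},
\[
|P_1C_1|>\rho_{\min}\frac{r}{s_1}|C_1|\ge\frac{r}{s_2}.
\]
Hence we may again run a filler until the ratio first drops to $\le r/s_2$, which gives a prefactor $P_2$ with $\rho_{\min}r/s_2<|P_2|\le r/s_2$. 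Then run block $2$, and continue in this way. Let $T$ be the end of block $m$. Then $T$ is a bounded stopping time, and $|R_T|=|P_mC_m|>\rho_{\min}r|C_m|/s_m$, which is the final claim.

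\emph{Estimate.} Let $\mathcal F$ be the $\sigma$-algebra generated by all filler increments and by $C_1,\dots,C_m$. Then $R_T$ is $\mathcal F$-measurable. Conditional on $\mathcal F$, we have $B_T=b_0+\sum_j P_jD_j$, where $b_0$ and the $P_j$ are $\mathcal F$-measurable and the $D_j$ are conditionally independent with laws $\cL(D_j\mid C_j)$. By convexity of $W$ (Lemma~\ref{lem:convolution-properties}) applied to the mixture over $\mathcal F$ given $R_T$, we get $\E[W(\cL(B_T\mid R_T);r)]\le\E[W(\cL(B_T\mid\mathcal F);r)]$. Translation invariance, \eqref{eq:product-W} and \eqref{eq:affine-W} then bound the integrand by
\[
\prod_j W\bigl(\cL(D_j\mid C_j);r/|P_j|\bigr).
\]
Since $s_j\le r/|P_j|<s_j/\rho_{\min}$, \eqref{eq:scale-W} bounds each factor by $w_j^{\rho_{\min}^2}$, where $w_j=W(\cL(D_j\mid C_j);s_j)$ depends only on block $j$. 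The blocks are independent, so the expectation of the product factorizes. Jensen's inequality for the concave map $x\mapsto x^{\rho_{\min}^2}$ then gives \eqref{eq:concatenation-prefix}. For \eqref{eq:concatenation-nu}, use \eqref{eq:stationarity}: $\nu$ is the law of $B_T+R_TZ'$. Disintegrate over $R_T$ and use convexity together with \eqref{eq:product-W}, bounding the factor coming from $R_TZ'$ by $W\le1$.

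The main obstacle I expect is the bookkeeping of the conditioning. One must check that conditioning on the filler data and the $C_j$ keeps the $D_j$ independent with exactly the laws $\cL(D_j\mid C_j)$, even though the filler lengths depend on earlier blocks. This should follow from the strong Markov property: each block and each filler is run on fresh independent increments.
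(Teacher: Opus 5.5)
Your proposal is correct and follows essentially the same route as the paper. Your fillers are the paper's stopping times $S_j=\min\{n\ge0:|R_n|\le r/s_j\}$ with $P_j=A_j=R_{S_j}$, and your $\mathcal F$ is the paper's $\cA$ (block ratios plus the words in the gaps). From there the chain of convexity, \eqref{eq:product-W}, \eqref{eq:affine-W}, \eqref{eq:scale-W}, independence of the $C_j$ and Jensen is the same as the paper's, and you additionally spell out the conditional independence of the $D_j$ and the treatment of the $R_TZ'$ factor, which the paper states only briefly.
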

\begin{proof}
For $1\leq j \leq m$ define
\[
 S_j=\min\{n\ge0:|R_n|\le r/s_j\} \quad\text{ and } \quad
T_j=S_j+\tau_j^{(S_j)},
\]
where $\tau_j^{(S_j)}$ applies the $j$th stopping rule to the
increments after $S_j$. In other words the law of $(X_{S_j+1}, X_{S_j+2},\dots, X_{S_j+\tau_j^{(S_j)}})$ given $X_1, \dots, X_{S_j}$ is $\mu^{*\tau_j}$. Denote $A_j=R_{S_j}$. By construction
\begin{equation}\label{eq:overshoot}
 \rho_{\min} r/s_j<|A_j|\le r/s_j.
\end{equation}
Thus \eqref{eq:separation} implies
\[
 |R_{T_j}|=|A_jC_j|>r/s_{j+1},
\]
so $T_j<S_{j+1}$. Therefore, the selected blocks are disjoint and their
stopped products are independent. As $\max_i|r_i|<1$ all stopping times
are uniformly bounded.
We take $T=T_m$ and the final assertion that $|R_T|>\rho_{\min} r|C_m|/s_m$ follows from \eqref{eq:overshoot}.

Let $\cA$ include the ratios $C_1,\ldots,C_m$ and the complete words
in all intervening gaps including the initial gap. Then the $A_j$
and $R_T$ are $\cA$-measurable, and
\begin{equation}\label{eq:conditional-blocks}
 B_T=d_{\cA}+\sum_{j=1}^m A_jD_j
\end{equation}
for an $\cA$-measurable translation $d_{\cA}$. Given $\cA$, the $D_j$
are independent and have laws $\cL(D_j\mid C_j)$. 

Put $v_j(c)=W(\cL(D_j\mid C_j=c);s_j)$. From
\eqref{eq:conditional-blocks}, \eqref{eq:product-W} and
\eqref{eq:overshoot},
\[
 W(\cL(B_T\mid\cA);r)
 \le\prod_{j=1}^m
 W(\cL(D_j\mid C_j);r/|A_j|)
 \le\prod_{j=1}^m v_j(C_j)^{\rho_{\min}^2}.
\]
Average and use convexity to pass from $\cA$ to $R_T$. The $C_j$
are independent, so Jensen's inequality yields
\[
 \E\left[\prod_jv_j(C_j)^{\rho_{\min}^2}\right]
 =\prod_j\E[v_j(C_j)^{\rho_{\min}^2}]
 \le\prod_j \E[v_j(C_j)]^{\rho_{\min}^2}.
\]
This proves \eqref{eq:concatenation-prefix}. Adding the
tail $R_TZ'$ with $Z' \sim \nu$ independent of the random walk proves \eqref{eq:concatenation-nu}.
\end{proof}

We next state the following proposition, which is similar to \cite{KKabs}*{Proposition 6.7} or \cite{KK25}*{Proposition 5.8} but importantly does not rely on any separation assumption as we use Corollary~\ref{cor:fine-variance}.

\begin{proposition}\label{prop:stopped-block}
Suppose $\dim \nu<\min\{1,h_\mu/|\chi_{\mu}|\}$. Then there is $\kappa>0$ such that,
for every $\varepsilon>0$, there is a bounded stopping time $\tau$
and some $r>0$ satisfying
\begin{equation}\label{eq:stopped-block}
 \E[W(\cL(B_\tau\mid R_\tau);r)]\le e^{-\kappa},
 \qquad r<\varepsilon|R_\tau|\quad\text{almost surely}.
\end{equation}
\end{proposition}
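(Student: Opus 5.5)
The plan is to follow the homogeneous sketch at the start of this section, carried out conditionally on the contraction ratio. Corollary~\ref{cor:fine-variance} supplies the variance energy. The multiscale selection turns it into a finite family of well-separated scales carrying a fixed amount of normalized local variance. Lemma~\ref{lem:concatenation} assembles these into a single stopped walk. Finally, \eqref{eq:local-to-convolution} converts local variance into a bound on $W$.

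Fix $C>e^{|\chi_\mu|}$ and let $n$ be large. Corollary~\ref{cor:fine-variance} gives $\E[\cE_{C^{-n}}(\cL(B_n\mid R_n))]\ge\eta n$. Set $d=\log(\rho_{\min}^{-(n+2)})$, which is of order $n$. Writing $\cE_{C^{-n}}$ as $\int_0^{C^{-n}}V(s)\,ds/s$ and averaging over the grid offset $u\in[0,d)$, I find some $u$ with
\[
\E\Bigl[\sum_{k:\,e^{u+kd}<C^{-n}}V_{\cL(B_n\mid R_n)}(e^{u+kd})\Bigr]\ge\frac{\eta n}{d}\ge\kappa_0>0,
\]
where $\kappa_0$ depends only on $\mu$. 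The offset is deterministic, so the scales do not depend on $R_n$. Every conditional law is finitely supported, so only finitely many terms are nonzero. By monotone convergence I can therefore truncate to scales $s_1<\dots<s_m<C^{-n}$ with $s_{j+1}=e^{d}s_j$ and total expected variance at least $\kappa_0/2$. Since $|R_n|\ge\rho_{\min}^n$, the separation condition \eqref{eq:separation}, namely $s_j\le\rho_{\min}s_{j+1}|C_j|$, holds for blocks with $\tau_j=n$ and $C_j=R_n$. This is because $e^{d}=\rho_{\min}^{-(n+2)}\ge\rho_{\min}^{-1}|R_n|^{-1}$.

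Next I apply Lemma~\ref{lem:concatenation} with $m$ independent copies of the length-$n$ block and a common scale $r\le s_1$. By \eqref{eq:local-to-convolution} and Jensen,
\[
\E[W(\cL(B_n\mid R_n);s_j)]\le 1-(1-e^{-1})\E[V_{\cL(B_n\mid R_n)}(s_j)]\le\exp\bigl(-(1-e^{-1})\E[V(s_j)]\bigr).
\]
Then \eqref{eq:concatenation-prefix} gives a bounded stopping time $T$ with
\[
\E[W(\cL(B_T\mid R_T);r)]\le\exp\bigl(-\rho_{\min}^2(1-e^{-1})\kappa_0/2\bigr)=:e^{-\kappa}.
\]
The lemma also gives $|R_T|>\rho_{\min}r|R_n^{(m)}|/s_m\ge\rho_{\min}^{n+1}r/s_m$. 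To obtain $r<\varepsilon|R_T|$ it therefore suffices that $s_m<\varepsilon\rho_{\min}^{n+1}$. Since $s_m<C^{-n}$, this holds once $C^{-n}\le\varepsilon\rho_{\min}^{n+1}$. So I choose $C>\rho_{\min}^{-1}$, which is allowed because $\rho_{\min}^{-1}\ge e^{|\chi_\mu|}$, and take $n$ large in terms of $\varepsilon$. Then $\tau=T$ and any $r\le s_1$ work, and $\kappa$ is independent of $\varepsilon$.

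The main obstacle is making the scale selection uniform. The constant $\kappa_0$ must not degrade with $n$, which holds because the gap $d$ is linear in $n$ while the energy is at least $\eta n$. The separation must also hold almost surely in spite of the random ratio $R_n$. Both points hinge on using $\rho_{\min}^{n}$ as the worst-case contraction and choosing $C$ exceeding $\rho_{\min}^{-1}$ rather than just $e^{|\chi_\mu|}$. I also need to check that the expected local variance of the conditional laws, and not merely of $B_n$, is what enters Lemma~\ref{lem:concatenation}. This is consistent, since that lemma conditions on $C_j=R_n$ exactly as Corollary~\ref{cor:fine-variance} does.
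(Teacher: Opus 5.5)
Your proposal is correct and follows the paper's proof essentially step for step. You fix $C>\rho_{\min}^{-1}$, average logarithmic grids with gap of order $n\log\rho_{\min}^{-1}$ to turn the bound of Corollary~\ref{cor:fine-variance} into separated scales $s_1<\cdots<s_m<C^{-n}$, concatenate fixed-length blocks via Lemma~\ref{lem:concatenation}, convert local variance with \eqref{eq:local-to-convolution}, and obtain $r<\varepsilon|R_\tau|$ from $(C\rho_{\min})^{-n}\to0$. The only differences are cosmetic: you use gap $\rho_{\min}^{-(n+2)}$ where the paper uses $\rho_{\min}^{-(N+1)}$, and the step you attribute to Jensen is just linearity of expectation.
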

\begin{proof}
Fix $C>\rho_{\min}^{-1}$ with $\rho_{\min} = \min_{1 \leq i \leq k} |r_i|$ and let $\eta$ be as in
Corollary~\ref{cor:fine-variance}. For sufficiently large $N$, write
\[
 w_N(s)=\E[V_{\cL(B_N\mid R_N)}(s)] \quad \text{ and } \quad
  d_N=(N+1)\log\rho_{\min}^{-1}.
\]
Then $\int_0^{C^{-N}}w_N(s)\,ds/s\ge\eta N$ for a suitable $\eta > 0$.
The conditional laws involved are finitely supported, so $w_N(s)=0$
for all sufficiently small $s$. Averaging the logarithmic grids
$\{e^{u+jd_N}:j\in\mathbb Z\}$ over $u\in[0,d_N)$ gives
\[
 \frac1{d_N}\int_0^{d_N}
 \sum_{e^{u+jd_N}<C^{-N}}w_N(e^{u+jd_N})\,du
 =\frac1{d_N}\int_0^{C^{-N}}w_N(s)\,\frac{ds}{s}
 \ge\frac{\eta}{2\log\rho_{\min}^{-1}}.
\]
Consequently, some finite collection
$0<s_1<\cdots<s_m<C^{-N}$ satisfies
\begin{equation}\label{eq:selected-scales}
 s_{j+1}\ge\rho_{\min}^{-(N+1)}s_j \quad \text{ and } \quad \sum_{j=1}^m w_N(s_j)\ge
 \eta_0:=\frac{\eta}{2\log\rho_{\min}^{-1}}>0.
\end{equation}
Apply Lemma~\ref{lem:concatenation} with $r=s_1$ and with every
block having the fixed length $N$. Condition \eqref{eq:separation}
holds because $|R_N|\ge\rho_{\min}^N$. Using
\eqref{eq:local-to-convolution} and $1-x\le e^{-x}$ gives
\begin{align*}
 \E[W(\cL(B_T\mid R_T);r)]
 &\le\prod_{j=1}^m
 \bigl(1-(1-e^{-1})w_N(s_j)\bigr)^{\rho_{\min}^2}\\
 &\le\exp\bigl(-\rho_{\min}^2(1-e^{-1})\eta_0\bigr).
\end{align*}
Moreover, the last assertion of Lemma~\ref{lem:concatenation} gives
\[
 \frac r{|R_T|}<\frac{s_m}{\rho_{\min}^{N+1}}
 <\rho_{\min}^{-1}(C\rho_{\min})^{-N}.
\]
Since $C\rho_{\min}>1$, choose $N$ sufficiently large that the last
expression is at most $\varepsilon$. Taking $\tau=T$ and
$\kappa=\rho_{\min}^2(1-e^{-1})\eta_0$ proves the proposition.
\end{proof}

\subsection{Conclusion}\label{sec:conclusion}
\begin{proof}[Proof of Theorem~\ref{thm:main}]
The upper bound
\[
 \dim \nu \leq \min \left\{1,\frac{h_\mu}{|\chi_\mu|}\right\}
\]
is well known. In particular it follows from Feng--Hu \cite{FH}. Indeed, apply
their projection-entropy bound \cite{FH}*{Theorem~2.2(i)} to the system given by the distinct
similarities in $\supp(\mu^{*n})$. Its symbolic entropy is
$H(\mu^{*n})$ and its Lyapunov exponent is $n\chi_\mu$, so
$\dim\nu\leq\min\{1,
 \frac{H(\mu^{*n})}{n|\chi_\mu|}\}$.
Letting $n\to\infty$ gives the claimed upper bound.
It remains to show that $\dim \nu \geq \min \{ 1, \frac{h_{\mu}}{|\chi_{\mu}|} \}$. Assume for contradiction that this does not hold.

Choose $(\tau_1,r_1)$ from
Proposition~\ref{prop:stopped-block} with $1$ in the role of $\varepsilon$.
Inductively choose $(\tau_{j+1},r_{j+1})$ with $\rho_{\min} r_j$
 in the role of $\varepsilon$. Thus
\begin{equation}\label{eq:recursive-scales}
 r_{j+1}<\rho_{\min} r_j|R_{\tau_{j+1}}|
 \quad \text{ and } \quad
 \E[W(\cL(B_{\tau_j}\mid R_{\tau_j});r_j)]\le e^{-\kappa}.
\end{equation} For each $n$, apply Lemma~\ref{lem:concatenation} at the scale
$r_n$ with the blocks ordered as
\[
 (\tau_n,r_n),(\tau_{n-1},r_{n-1}),\ldots,(\tau_1,r_1).
\]
The separation hypothesis is exactly \eqref{eq:recursive-scales}.
Consequently
\[
 W(\nu;r_n)\le e^{-n\kappa\rho_{\min}^2}\longrightarrow0.
\]
Proposition~\ref{prop:w_to_zero} gives $\dim\nu=1$, a
contradiction. This proves the theorem.
\end{proof}

For completeness we record that Corollary~\ref{cor:self-similar-sets} follows from Theorem~\ref{thm:main} exactly as \cite{H}*{Corollary 1.2} from \cite{H}*{Theorem 1.1}.


\end{document}